\numberwithin{equation}{section}
\numberwithin{figure}{section}
\theoremstyle{theorem}
  \newtheorem*{thm*}{Theorem}
  \newtheorem*{cor*}{Corollary}
  \newtheorem*{lem*}{Lemma}
  \newtheorem*{ass*}{Standing assumption}
  \newtheorem*{fact*}{Fact}
  \newtheorem*{conj*}{Conjecture}
  \newtheorem*{ques*}{Question}
  \newtheorem{thm}{Theorem}[section]
  \newaliascnt{lem}{thm}
  \newtheorem{lem}[lem]{Lemma}
  \newaliascnt{cor}{thm}
  \newtheorem{cor}[cor]{Corollary}
  \newaliascnt{prop}{thm}  
  \newtheorem{prop}[prop]{Proposition}
  \newaliascnt{exmpl}{thm}
  \newtheorem{thmx}{Theorem}
  \newaliascnt{corx}{thmx}
\theoremstyle{remark}
  \newtheorem*{rem*}{Remark}
  \newaliascnt{rem}{thm}
  \newtheorem{rem}[rem]{Remark}
 \theoremstyle{definition}
    \newtheorem*{defn*}{Definition}
   \newaliascnt{defn}{thm}
\newcommand{\bbC}{\mathbb{C}}
\newcommand{\bbR}{\mathbb{R}}
\newcommand{\bbZ}{\mathbb{Z}}       
\newcommand{\frg}{\mathfrak{g}}
\newcommand{\frh}{\mathfrak{h}}
\newcommand{\frk}{\mathfrak{k}}
\newcommand{\frl}{\mathfrak{l}}
\newcommand{\frmm}{\mathfrak{m}}
\newcommand{\frp}{\mathfrak{p}}
\newcommand{\frz}{\mathfrak{z}}
\DeclareMathOperator{\id}{id}          
\DeclareMathOperator{\HH}{H}
\newcommand{\Hc}{{\rm H}_{\rm c}}
\newcommand{\bcdot}{{\scriptstyle \bullet}}
\DeclareMathOperator{\SL}{SL}         
\DeclareMathOperator{\PSL}{PSL}       
\DeclareMathOperator{\SU}{SU}             
\newcommand{\fsl}{\mathfrak{sl}}		
\newcommand{\fsu}{\mathfrak{su}}		
\newcommand{\fso}{\mathfrak{so}}		
\newcommand{\qand}{\quad \mathrm{and} \quad}
\title{On the Third-Degree Continuous Cohomology of Simple Lie Groups}
\author{Carlos De la Cruz Mengual}
\address{Department of Mathematics, ETH Zurich. R\"amistrasse 101, 8092 Zurich, Switzerland.}
\email{carlos.delacruz@math.ethz.ch}
\begin{document}

\maketitle

\begin{abstract}
We show that the class of connected, simple Lie groups that have non-vanishing third-degree continuous cohomology with trivial $\bbR$-coefficients consists precisely of all simple complex Lie groups and of $\widetilde{\SL_2(\bbR)}$. 
\end{abstract}

\section{Introduction} \label{sec:introd} 
Continuous cohomology $\Hc^\bcdot$ is an invariant of topological groups, defined in an analogous fashion to the ordinary group cohomology, but with the additional assumption that cochains---with values in a topological group-module as coefficient space---are continuous. A basic reference in the subject is the book by Borel--Wallach \cite{BW}, while a concise survey by Stasheff \cite{Stas} provides an overview of the theory, its relations to other cohomology theories for topological groups, and some interpretations of algebraic nature for low-degree cohomology classes. \vspace{4pt}

In the case of connected, (semi)simple Lie groups and trivial real coefficients, continuous cohomology is also known to successfully detect geometric information. For example, the situation for degree two is well understood: Let $G$ be a connected, non-compact, simple Lie group with finite center. Then $\Hc^2(G;\bbR) \neq 0$ if and only $G$ is of Hermitian type, i.e. if its associated symmetric space of non-compact type admits a $G$-invariant complex structure. If that is the case, $\Hc^2(G;\bbR)$ is one-dimensional, and explicit continuous 2-cocycles were produced by Guichardet--Wigner in \cite{GW} as an obstruction to extending to $G$ a homomorphism $K \to S^1$, where $K < G$ is a maximal compact subgroup. \vspace{4pt}

The goal of this note is to clarify a similar geometric interpretation of the third-degree continuous cohomology of simple Lie groups. Recall that a complex structure on a Lie algebra $\frg$ is a linear map $J\in {\rm End}(\frg)$ that satisfies the identity $J^2 = -\id$ and that commutes with the adjoint representation of $\frg$.

\begin{thmx} \label{thm:simplefc}
Let $G$ be a connected, simple Lie group with finite center, and let $\frg$ be its Lie algebra. Then the following are equivalent:
\begin{enumerate}[label=\emph{(\arabic*)}]
	\item $\Hc^3(G;\bbR)\neq 0$. \vspace{1pt}
	\item $\dim \Hc^3(G;\bbR) = 1.$
        \item $\frg$ admits a complex structure.
        \item $G$ admits the structure of a complex Lie group.
\end{enumerate}
\end{thmx}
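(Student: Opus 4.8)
The plan is to convert continuous cohomology into the cohomology of a compact symmetric space and then read off the answer from the structure of the maximal compact subgroup. Throughout write $\frg = \frk \oplus \frp$ for a Cartan decomposition and let $K < G$ be the associated maximal compact subgroup; this is where finite center is used, since it guarantees that $K$ is genuinely compact (the exceptional behaviour of $\widetilde{\SL_2(\bbR)}$ stems precisely from its non-compact ``maximal compact'' $\bbR$). Let $\check X = U/K$ be the compact dual of $G/K$, where $U$ is the compact real form integrating $\frg_u = \frk \oplus i\frp$. The first step combines the Van~Est isomorphism with Cartan's theorem that invariant forms on a symmetric space are harmonic, giving
\[
\Hc^3(G;\bbR) \;\cong\; \HH^3(\frg,\frk;\bbR) \;\cong\; (\Lambda^3 \frp^*)^K \;\cong\; \HH^3(\check X;\bbR),
\]
so that everything reduces to the third Betti number of $\check X$.

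Next I would dispose of the purely algebraic equivalence (3)$\Leftrightarrow$(4). If $G$ is a complex Lie group then multiplication by $i$ on $\frg$ is a complex structure commuting with the adjoint action. Conversely a complex structure $J$ presents $\frg$ as $\frh_\bbR$ for a complex simple Lie algebra $\frh$; the simply connected group $\widetilde G$ is then a complex Lie group, and as $G$ has finite center, $G = \widetilde G / Z$ is a quotient by a discrete (hence complex-analytic) central subgroup and inherits a complex structure. The real content of the theorem is therefore the equivalence of the cohomological conditions with the condition that $\frg$ be complex, equivalently that $\frg$ fail to be absolutely simple.

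When $\frg = \frh_\bbR$ is complex one has $\frk = \fru$, $\frp = i\fru$, the compact dual collapses to the group manifold $\check X \cong U$, and $(\Lambda^3 \fru^*)^U = \HH^3(U;\bbR) = \bbR$ is one-dimensional, spanned by the Cartan $3$-form $(X,Y,Z)\mapsto \langle X,[Y,Z]\rangle$; this yields (4)$\Rightarrow$(2), while (2)$\Rightarrow$(1) is immediate. It remains to prove (1)$\Rightarrow$(4), i.e.\ that $\frg$ absolutely simple forces $\HH^3(\check X;\bbR)=0$; now $U$ is compact \emph{simple}. If $G/K$ is Hermitian then $\check X$ is a simply connected compact Hermitian symmetric space, hence a generalized flag manifold admitting a cell decomposition in even dimensions only, so its odd cohomology---in particular $\HH^3$---vanishes. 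If $G/K$ is not Hermitian then $K$ is semisimple, whence $\HH^1(K)=\HH^2(K)=0$ and $\HH^2(\check X)=0$; feeding this into the Serre spectral sequence of $K \to U \to \check X$ in total degree $3$ kills the cross terms and leaves the single relation $1 = \dim \HH^3(\check X;\bbR) + \dim \im\!\big(\iota^*\colon \HH^3(U;\bbR)\to \HH^3(K;\bbR)\big)$, with $\iota\colon K\hookrightarrow U$.

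The crux, and the step I expect to be the main obstacle, is to show that this restriction map is nonzero. I would argue directly on invariant forms: the generator of $\HH^3(U;\bbR)$ is the bi-invariant Cartan form built from the Killing form of $\fru$, and its restriction to $\frk$ is the bi-invariant form $(X,Y,Z)\mapsto\langle[X,Y],Z\rangle$ on $K$. Since $K$ is a nontrivial semisimple group it has a simple factor on which the (negative definite) Killing form of $\fru$ restricts nondegenerately, so this form is not identically zero; being bi-invariant on the compact group $K$ it is harmonic and represents a nonzero class. Hence $\iota^*\neq 0$, forcing $\HH^3(\check X;\bbR)=0$ and completing (1)$\Rightarrow$(4); together with (3)$\Leftrightarrow$(4) and the cycle (4)$\Rightarrow$(2)$\Rightarrow$(1)$\Rightarrow$(4) this proves the theorem. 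The delicate points to pin down are the normalization in Cartan's harmonicity theorem and the nondegeneracy of the ambient Killing form on each simple ideal of $\frk$---both standard, but exactly where a careless argument could slip.
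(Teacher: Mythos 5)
Your proposal is correct, and for the hard implication it takes a genuinely different route from the paper. The reductions agree: both arguments pass through van Est and Cartan to $\Hc^3(G)\cong (\Lambda^3\frp^\ast)^{\frk}\cong \HH^3(G_u/K_u)$, and both settle (3) $\Rightarrow$ (2) by the one-dimensionality of the degree-three invariants of a compact simple Lie algebra (the paper via the Greub--Halperin--Vanstone isomorphism $({\rm V}^2\frk^\ast)^\frk\cong(\Lambda^3\frk^\ast)^\frk$, you by quoting $\HH^3(U;\bbR)\cong\bbR$ directly --- same content). The divergence is in (1) $\Rightarrow$ (3). The paper splits on $\frk$ abelian versus non-abelian: the abelian case gives $G_u/K_u\cong S^2$ via Schur's lemma, and the non-abelian case is handled homotopically --- Bott's $\pi_2=0$, $\pi_3\cong\bbZ$, the long exact sequence of $K_u\hookrightarrow G_u\twoheadrightarrow G_u/K_u$, and Onishchik's Dynkin-index theorem to see that $\iota_\#$ on $\pi_3$ is nonzero, so $\pi_3(G_u/K_u)$ is finite and Hurewicz kills $\HH^3$. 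You instead split on Hermitian versus non-Hermitian, dispose of the Hermitian case by the even-cell decomposition of compact Hermitian symmetric spaces, and in the semisimple-$\frk$ case run the Serre spectral sequence of the fibration, reducing everything to the nonvanishing of $\iota^\ast:\HH^3(G_u)\to\HH^3(K_u)$, which you verify by restricting the Cartan $3$-form and using that nonzero bi-invariant forms on a compact group are harmonic. Your restriction argument is essentially the de Rham shadow of the paper's Dynkin-index step (the index measures exactly the ratio of the two Killing forms), so you buy an elementary, self-contained replacement for the citation of Onishchik at the price of importing the Schubert-cell fact for the Hermitian case; the paper's Proposition has the advantage of applying to an arbitrary connected closed non-abelian subgroup $L<U$, which it reuses verbatim in the proof of its second theorem. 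Your relation $1=\dim\HH^3(\check X;\bbR)+\dim\im\iota^\ast$ checks out, since $\HH^1(K)=\HH^2(K)=0$ kills all differentials into and out of the relevant $E_r^{3,0}$ and $E_r^{0,3}$; just make sure to take $U$ simply connected so that $\check X$ is simply connected and the coefficient system is untwisted.
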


\noindent Removing the hypothesis of finite center results in the addition of only one Lie group to this collection. 

\begin{thmx} \label{thm:simpleic}
For a connected, simple Lie group $G$ of infinite center, $\Hc^3(G;\bbR) \neq 0$ if and only if $G$ is isomorphic to $\widetilde{\SL(2,\bbR)}$, the universal cover of $\SL(2,\bbR)$. In that case, $\dim \Hc^3(G;\bbR) = 1$. 
\end{thmx}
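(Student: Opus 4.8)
The plan is to reduce everything to relative Lie algebra cohomology and then to track precisely how passing to an infinite-center cover changes the maximal compact subgroup. First I would recall that a connected simple Lie group has infinite center exactly when its Lie algebra $\frg$ is of Hermitian type: in that case $\frk$ has a one-dimensional center $\frz=\bbR Z_0$, where $\ad(Z_0)$ restricts to the complex structure on $\frp$ in a Cartan decomposition $\frg=\frk\oplus\frp$. The key structural point is that for an infinite-center $G$ the circle in the adjoint group integrating $\frz$ unwinds to a copy of $\bbR$; consequently the maximal compact subgroup $K<G$ has Lie algebra the semisimple part $\frk_s:=[\frk,\frk]$ rather than all of $\frk$. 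By van Est's theorem this gives $\Hc^\bullet(G;\bbR)\cong\HH^\bullet(\frg,\frk_s;\bbR)$, in contrast with the finite-center computation $\HH^\bullet(\frg,\frk;\bbR)$ underlying \autoref{thm:simplefc}.

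Next I would compare the two relative complexes through the one-dimensional ``fiber'' $\frk/\frk_s=\frz$. Letting $\zeta\in\frg^*$ be dual to $Z_0$ and vanish on $\frk_s\oplus\frp$, one computes $d\zeta=-\omega$, where $\omega(X,Y)=\langle[X,Y],Z_0\rangle$ is (a multiple of) the Kähler form, i.e.\ the generator of $\HH^2(\frg,\frk)\cong\bbR$. This is exactly the algebraic Gysin situation for a circle bundle, from which I would extract the long exact sequence
\begin{equation*}
\cdots\to\HH^{n-2}(\frg,\frk)\xrightarrow{\,\cup[\omega]\,}\HH^{n}(\frg,\frk)\to\HH^{n}(\frg,\frk_s)\to\HH^{n-1}(\frg,\frk)\xrightarrow{\,\cup[\omega]\,}\HH^{n+1}(\frg,\frk)\to\cdots.
\end{equation*}
Since $\frg$ is simple and $\HH^\bullet(\frg,\frk)$ is the cohomology of the compact dual, a compact Hermitian symmetric space whose cohomology is concentrated in even degrees, we have $\HH^1(\frg,\frk)=\HH^3(\frg,\frk)=0$. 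Feeding this into the sequence at $n=3$ collapses it to $\HH^3(\frg,\frk_s)\cong\ker\!\big(\cup[\omega]\colon\HH^2(\frg,\frk)\to\HH^4(\frg,\frk)\big)$.

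It then remains to decide when this kernel is nonzero. As $\HH^2(\frg,\frk)$ is one-dimensional, spanned by $[\omega]$, the map is multiplication by $[\omega]$, so the kernel is nonzero precisely when $[\omega]^2=0$. Interpreting $[\omega]$ as the Kähler class of the compact dual $M^\vee$, a compact Kähler manifold of complex dimension $d\geq1$ satisfies $[\omega]^d\neq0$; hence $[\omega]^2=0$ forces $d=1$, i.e.\ $M^\vee=\bbC P^1$, i.e.\ $\frg\cong\fsl(2,\bbR)$. In that case the kernel is all of $\HH^2(\frg,\frk)$, so $\dim\HH^3(\frg,\frk_s)=1$, while in every other Hermitian case the kernel vanishes. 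Finally, among connected simple groups with Lie algebra $\fsl(2,\bbR)$ only the universal cover $\widetilde{\SL(2,\bbR)}$ has infinite center---every proper cover of $\PSL(2,\bbR)$ is finite, and for those $\frk_s=\frk$ returns the vanishing answer of \autoref{thm:simplefc}---which yields the stated dichotomy together with one-dimensionality.

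I expect the main obstacle to be the two structural inputs rather than the bookkeeping: first, proving rigorously that infinite center forces the central circle of $K$ to unwind, so that the correct object is $\HH^\bullet(\frg,\frk_s)$ with $\frk_s$ semisimple; and second, identifying the connecting homomorphism of the Gysin sequence with cup product by $[\omega]$ and pinning down its normalization---that $d\zeta$ is genuinely the Kähler cocycle and does not vanish. Once these are secured, the vanishing of $[\omega]^2$ outside $\fsl(2,\bbR)$ is a one-line Lefschetz argument.
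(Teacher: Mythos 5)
Your argument is correct in substance, but it takes a genuinely different route from the paper for the key computation. The first structural step is the same in both: infinite center forces $\frz(\frk)\neq 0$, hence $\dim\frz(\frk)=1$ and the central circle of $K$ unwinds to a copy of $\bbR$, so the maximal compact subgroup of $G$ has Lie algebra $\frk_s=[\frk,\frk]$ (this is exactly the paper's \autoref{thm:Mmaxcpt}, proved from the splitting $K\cong R\times M$ with $R\cong\bbR$; the obstacle you flag here is real but is handled by that lemma). After that the paths diverge. The paper passes to the compact dual $G_u/M_u$ via \autoref{thm:corollary_cartan} and invokes \autoref{thm:nonabelian}, a purely homotopy-theoretic vanishing statement ($\pi_2$ and $\pi_3$ of Lie groups plus the Dynkin index) valid for \emph{any} connected, closed, non-Abelian subgroup of a $1$-connected compact simple group, treating the Abelian case $\frg\cong\fsl(2,\bbR)$ separately by identifying $G_u/M_u\cong S^3$. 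You instead exploit the Hermitian structure: the Gysin-type sequence for the pair $(\frk,\frk_s)$ reduces everything to $\ker\bigl(\cup[\omega]\colon\HH^2(\frg,\frk)\to\HH^4(\frg,\frk)\bigr)$, and $[\omega]^d\neq 0$ on the compact dual kills this kernel unless $d=1$. This is valid, and it has the merit of treating the two cases uniformly and of pinpointing the obstruction as the square of the K\"ahler class; what it costs is two inputs the paper avoids: (a) the justification that the algebraic Gysin sequence really computes $\HH^\bullet(\frg,\frk_s)$ in terms of $\HH^\bullet(\frg,\frk)$ --- the naive splitting $(\Lambda^\bullet(\frg/\frk_s)^\ast)^{\frk_s}=(\Lambda^\bullet\frp^\ast)^{\frk_s}\oplus\zeta\wedge(\Lambda^{\bullet-1}\frp^\ast)^{\frk_s}$ is \emph{not} a short exact sequence of complexes, and one must first reduce to $\frk$-invariant cochains via the Cartan homotopy formula $L_{Z_0}=d\iota_{Z_0}+\iota_{Z_0}d$ (or, equivalently, run the topological Gysin sequence of the circle bundle $G_u/M_u\to G_u/K_u$ and check the Euler class is a nonzero multiple of $[\omega]$); and (b) the vanishing of $\HH^1$ and $\HH^3$ of the compact dual Hermitian symmetric space, which you should either cite (Schubert cell decomposition into even-dimensional cells) or deduce from \autoref{thm:simplefc} applied to the adjoint group, noting that a Hermitian simple Lie algebra is never complex. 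With those two points secured, your proof is complete and correct.
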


Before proceeding to the proofs of the theorems, we comment on the question of explicit 3-cocycles in the setting of \autoref{thm:simplefc}. Thus, fix $G$ as in \autoref{thm:simplefc}, and assume that the equivalent conditions hold. Let $J$ be a complex structure on the Lie algebra $\frg$ of $G$, and regard it as a complex Lie algebra. Let 
\begin{itemize}
 \item $\frk \subset \frg$ be a compact real form of $\frg$, 
 \item $B_\frg$ be the Killing form of $\frg$, and 
 \item $K$ the connected subgroup of $G$ with Lie algebra $\frk$.
\end{itemize}
Then $\frg = \frk \oplus J\!\frk$ is a Cartan decomposition, and $\frk$ is simple; see \autoref{thm:cartandecJ} below for a reference. The subgroup $K$ is maximal compact in $G$, and $G/K$ is a symmetric space of non-compact type with ${\rm T}_K(G/K) \cong \frg/\frk \cong J\!\frk$. Let $(\Lambda^\bcdot(\frg/\frk)^\ast)^\frk$ denote the complex of $\frk$-invariant, alternating, multilinear forms on $\frg/\frk$, and $\Omega^\bcdot(G/K)^G$ be the complex of $G$-invariant differential forms on $G/K$. Left-translation of an element of the former gives rise of an element of the latter, and this assignment is an isomorphism. \vspace{4pt}

It is a consequence of van Est's theorem that there is an isomorphism $\Hc^3(G;\bbR) \cong (\Lambda^\bcdot(J\!\frk)^\ast)^\frk$; use \autoref{thm:fc} with $\frp = J\!\frk$. The formula 
\begin{equation} \label{eq:cocycle}
	\omega(X,Y,Z) := B_\frg(X,J[Y,Z]), \qquad X,Y,Z \in J\!\frk,
\end{equation}
defines a non-zero element $\omega \in (\Lambda^3(J\!\frk)^\ast)^\frk$. Let $\tilde{\omega} \in \Omega^3(G/K)^G$ be corresponding 3-form. This one integrated over ``3-simplices'' produces a non-trivial, $G$-invariant continuous 3-cocycle $I_{\omega}: G^4 \to \bbR$. \vspace{4pt}

More precisely: Fix a base point $o \in G/K$. For any $k$-tuple $(g_0,\ldots,g_k) \in G^k$, consider the \emph{geodesic $k$-simplex} $\Delta(g_0,\ldots,g_k) \subset G/K$, defined inductively as follows: let $\Delta(g_0) := \{g_0 \cdot o\}$, and for $k > 0$, set $\Delta(g_0,\ldots,g_k)$ to be the union of the geodesics connecting $g_k \cdot o$ to each point in $\Delta(g_0,\ldots,g_{k-1})$.  It is not hard to verify that the expression 
\begin{equation} \label{eq:integral}
	I_{\omega}(g_0,\ldots,g_4) := \int_{\Delta(g_0,\ldots,g_4)} \tilde{\omega}
\end{equation}
is a well-defined $G$-invariant continuous 3-cocycle. Finally, its non-triviality follows from the fact, by Dupont \cite{Dup}, that integration over simplices realizes van Est's isomorphism at the level of cochains. \vspace{4pt} 

\begin{rem}
Concerning \autoref{thm:simpleic}, the group $G := \widetilde{\SL(2,\bbR)}$ has trivial maximal compact subgroup $M$. Thus, van Est's theorem (\autoref{thm:vanest} below) yields an isomorphism $\Hc^3(G;\bbR) \cong \HH^3(\Omega^\bcdot(G)^G)$. An obvious $G$-invariant 3-form of $G$ is its volume form. Arguing as with \eqref{eq:integral}, we conclude that the volume of 3-simplices is a non-trivial $G$-invariant continuous 3-cocycle of $G$. 
\end{rem}

We point out that the expression obtained by removing the $J$ in \eqref{eq:cocycle} is known to define a generating class of $\HH^3(\frk;\bbR)$ and of the de Rham cohomology group $\HH^3(K;\bbR)$; see the reference \cite{GHV}. However, we have found no account in the literature of the formula \eqref{eq:cocycle} nor of the statements of our two theorems. This was in fact the main motivation for writing this note. \vspace{4pt}

The rest of this paper contains proofs of Theorems \ref{thm:simplefc} and \ref{thm:simpleic}. They do not rely on the classification of simple Lie groups. \vspace{7pt}

\noindent {\bf Acknowledgments:} The author expresses his gratitude to his PhD advisor Marc Burger for suggesting the question of the characterization that led to \autoref{thm:simplefc}, and for many helpful discussions. He further thanks Tobias Hartnick and Maria Beatrice Pozzetti for their valuable comments and suggestions.

\section{Notation and background} 

\subsection{Notation} Whenever there is no explicit mention of coefficients when using any notion of cohomology, it should be understood for the rest of this paper that they are trivial $\bbR$-coefficients. The functor $\Hc^\bcdot$ refers to the continuous cohomology of topological groups. On the other hand, when applied to manifolds (including Lie groups), $\HH^\bcdot$ denotes their cohomology as spaces. \vspace{4pt}

If $\frg$ is a Lie algebra and $\frmm \subset \frg$ is any subalgebra, then $(\Lambda^\bcdot(\frg/\frmm)^\ast)^\frmm$ will denote the complex of $\frmm$-invariant, alternating, multilinear forms on $\frg/\frmm$; for the definition of the $\frmm$-action on $\frg/\frmm$ and of the coboundary operator, we refer the reader to Chapter 1 of \cite{BW}. The cohomology of this complex, denoted by $\HH^\bcdot(\frg,\frmm)$ is known as the \emph{Lie algebra cohomology of $\frg$ relative to $\frmm$}. If $\frmm$ is trivial, we will write $\HH^\bcdot(\frg)$ instead of $\HH^\bcdot(\frg,\frmm)$. \vspace{4pt}

If $G$ is a connected Lie group and $M < G$ is a closed subgroup, then $\Omega^\bcdot(G/M)^G$ denotes the complex of $G$-invariant differential forms on $G/M$, where the coboundary operator is the usual differential for forms, and the $G$-action on forms on $G/M$ is by left-translation. We denote the cohomology of this complex by $\HH^\bcdot_G(G/M)$; it is known as the \emph{$G$-invariant de Rham cohomology of $G/M$}. \vspace{4pt}

\subsection{Background on continuous cohomology of Lie groups} A powerful tool for computing the continuous cohomology of a connected Lie group $G$ is \emph{van Est's theorem}; see the original references by van Est \cite{vEst} and Hochschild--Mostow \cite{HM}. We quote it from \cite[Corollary IX.5.6]{BW}.

\begin{thm}[van Est, Hochschild--Mostow] \label{thm:vanest}
Let $G$ be a connected Lie group with Lie algebra $\frg$, and $M$ be a maximal compact (connected) subgroup of $G$ with Lie algebra $\frmm \subset \frg$. Then
\begin{equation} \label{eq:vanest}
	\Hc^\bcdot(G) \cong \HH^\bcdot_G(G/M) \cong \HH^\bcdot(\frg,\frmm). \vspace{10pt}
\end{equation}
\end{thm}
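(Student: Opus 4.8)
The plan is to establish the two isomorphisms separately: the right-hand one, $\HH^\bcdot_G(G/M) \cong \HH^\bcdot(\frg,\frmm)$, is an algebraic translation, whereas the left-hand one, $\Hc^\bcdot(G) \cong \HH^\bcdot_G(G/M)$, carries the analytic content and is where I would concentrate the effort. The guiding principle is that continuous cohomology of $G$ is computed by \emph{any} strong, relatively injective resolution of the trivial module $\bbR$ once one passes to $G$-invariants; the standard such resolution is the homogeneous bar complex of continuous functions $C(G^{\bcdot+1})$, but the theorem follows by exhibiting a second, geometric resolution built from differential forms on $G/M$ and invoking the uniqueness (up to canonical isomorphism) of the cohomology obtained from strong relatively injective resolutions.

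For the left isomorphism I would proceed as follows. First, because $M$ is maximal compact, the Cartan--Iwasawa decomposition gives a diffeomorphism of $G/M$ onto a Euclidean space, so $G/M$ is contractible; by the Poincar\'e lemma the augmented de Rham complex
\[
0 \to \bbR \to \Omega^0(G/M) \to \Omega^1(G/M) \to \Omega^2(G/M) \to \cdots
\]
is a resolution of $\bbR$ by smooth $G$-modules, and the homotopy operator supplied by the Poincar\'e lemma exhibits it as a \emph{strong} resolution of topological vector spaces. Second, I would identify $\Omega^q(G/M)$ with the space of smooth sections of the homogeneous bundle $G \times_M \Lambda^q(\frg/\frmm)^\ast$, i.e. with $C^\infty(G, \Lambda^q(\frg/\frmm)^\ast)^M$; since $M$ is compact, averaging over $M$ against a Haar probability measure yields the equivariant extensions that show each such induced module is relatively injective. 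Granting these two facts, the comparison theorem gives $\Hc^\bcdot(G) \cong \HH^\bcdot\bigl(\Omega^\bcdot(G/M)^G\bigr) = \HH^\bcdot_G(G/M)$. One technical point to settle here is that the de Rham complex is made of \emph{smooth} forms, so a priori it computes differentiable cohomology; I would invoke the theorem of van Est (or Blanc, Hochschild--Mostow) that continuous and differentiable cohomology coincide for Lie groups, which legitimizes replacing the continuous bar resolution by the smooth de Rham resolution.

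For the right isomorphism I would use evaluation at the base point $o = eM$. A $G$-invariant form $\alpha \in \Omega^q(G/M)^G$ is determined by its value $\alpha_o \in \Lambda^q(\mathrm{T}_o(G/M))^\ast \cong \Lambda^q(\frg/\frmm)^\ast$, and $G$-invariance forces $\alpha_o$ to be invariant under the isotropy action of $M$ on $\frg/\frmm$; conversely every such $M$-invariant form extends uniquely to a $G$-invariant form by left translation. As $M$ is connected, $M$-invariance is equivalent to $\frmm$-invariance, giving a linear isomorphism $\Omega^\bcdot(G/M)^G \cong (\Lambda^\bcdot(\frg/\frmm)^\ast)^\frmm$. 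It then remains to verify that this isomorphism intertwines the de Rham differential with the Chevalley--Eilenberg differential of relative Lie algebra cohomology; this is the classical formula expressing $d\alpha$ on invariant forms purely through the bracket of $\frg$, so the map is an isomorphism of complexes and induces $\HH^\bcdot_G(G/M) \cong \HH^\bcdot(\frg,\frmm)$.

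The main obstacle is the analytic core of the first isomorphism: verifying that $\Omega^\bcdot(G/M)$ is genuinely a strong relatively injective resolution in the topological (Fr\'echet-module) sense, and reconciling the smooth framework of differential forms with the a priori merely continuous cochains defining $\Hc^\bcdot(G)$. The relative injectivity rests essentially on the compactness of $M$, which makes $M$-averaging available to produce the required equivariant extensions, while the smooth-versus-continuous comparison is itself a nontrivial theorem; once both are in place, the remaining steps are formal.
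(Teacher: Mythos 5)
The paper does not prove this theorem at all --- it is quoted verbatim from Borel--Wallach \cite[Corollary IX.5.6]{BW} --- so there is no internal proof to compare against. Your sketch is a correct outline of the standard argument in that reference (and in Hochschild--Mostow): contractibility of $G/M$ via Cartan--Iwasawa makes the de Rham complex a strong resolution, compactness of $M$ gives relative injectivity of the induced modules $C^\infty(G,\Lambda^q(\frg/\frmm)^\ast)^M$, the smooth-versus-continuous comparison is correctly flagged as a separate ingredient, and the identification of $\Omega^\bcdot(G/M)^G$ with the relative Chevalley--Eilenberg complex is the classical evaluation-at-the-basepoint argument.
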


For the rest of this section, let $G$ be a non-compact, connected, semisimple\footnote{For background in semisimple Lie groups and Lie algebras, we refer the reader to \cite{Hel}.} Lie group with Lie algebra $\frg$, and let $M < G$ be a maximal compact subgroup with Lie algebra $\frmm$. Let us fix:
\begin{itemize}
	\item a maximal compactly embedded subalgebra $\frk \subset \frg$ such that $\frmm \subset \frk$,
	\item an associated Cartan decomposition $\frg = \frk \oplus \frp$ of $\frg$,
	\item the connected subgroup $K$ of $G$ with Lie algebra $\frk$,
	\item the compact dual $\frg_u := \frk \oplus i\frp$ of $\frg$,
	\item the \emph{compact dual} $G_u$ of $G$, i.e. the 1-connected, compact, semisimple Lie group with Lie algebra $\frg_u$, and
	\item the connected subgroup $M_u$ of $G_u$ with Lie algebra $\frmm$.
	\item the connected subgroup $K_u$ of $G_u$ with Lie algebra $\frk$.
\end{itemize}
\begin{rem} \label{rem:Kclosed} 
Any subgroup $K' < G$ with Lie algebra $\frk$ is automatically connected and closed in $G$, and contains the center of $G$. Furthermore, $K'$ is compact if and only if the center of $G$ is finite. If that is the case, $K'$ is a maximal compact subgroup of $G$, and $G/K'$ is a symmetric space of non-compact type. This is the content of Theorem VI.1.1 in \cite{Hel}.
\end{rem}
\begin{rem} \label{rem:Kuclosed} 
The connected subgroup $K_u < G_u$ above is necessarily closed in $G_u$. This and the 1-connectedness of $G_u$ imply that $G_u/K_u$ is a symmetric space of compact type. This follows from Proposition IV.3.6 in \cite{Hel}. \vspace{4pt}
\end{rem}

A second computational tool is the next theorem of Chevalley--Eilenberg \cite{ChEil}, whose main ideas they attribute to Cartan. \vspace{-1pt}
\begin{thm}[Cartan, Chevalley--Eilenberg] \label{thm:cartan}
If $M_u < G_u$ is closed, then $\HH^\bcdot(\frg_u,\frmm) \cong \HH^\bcdot(G_u/M_u)$. 
\end{thm}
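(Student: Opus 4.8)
\section*{Proof proposal}

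The plan is to prove the isomorphism in two steps, passing through the intermediate complex $\Omega^\bcdot(G_u/M_u)^{G_u}$ of $G_u$-invariant differential forms, whose cohomology is the invariant de Rham cohomology $\HH^\bcdot_{G_u}(G_u/M_u)$. In other words, I would establish the two isomorphisms
\[
	\HH^\bcdot(\frg_u,\frmm) \;\cong\; \HH^\bcdot_{G_u}(G_u/M_u) \;\cong\; \HH^\bcdot(G_u/M_u)
\]
separately and then compose them.

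First I would show that the inclusion $\Omega^\bcdot(G_u/M_u)^{G_u} \hookrightarrow \Omega^\bcdot(G_u/M_u)$ of invariant forms into all smooth forms induces an isomorphism on cohomology. This is the classical averaging argument, and it is the place where compactness of $G_u$ is essential. Let $d\mu$ be the normalized Haar measure on $G_u$, and define the averaging operator $A\alpha := \int_{G_u} g^\ast \alpha \, d\mu(g)$; it takes values in the invariant subcomplex, restricts to the identity there, and commutes with the exterior differential, hence is a projection chain map. Since $G_u$ is connected, for every $g \in G_u$ the diffeomorphism of $G_u/M_u$ given by $g$ is smoothly homotopic to the identity, so each pullback $g^\ast$ is chain-homotopic to the identity; integrating the corresponding homotopy operators against $d\mu$ yields a chain homotopy between $A$ and the identity on $\Omega^\bcdot(G_u/M_u)$. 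Consequently the inclusion of invariant forms is a cohomology isomorphism.

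Second, I would identify the invariant complex with the relative Lie algebra cochain complex. A $G_u$-invariant form $\alpha \in \Omega^k(G_u/M_u)^{G_u}$ is determined by its value at the base point $o = eM_u$, which is an alternating $k$-form on $\mathrm{T}_o(G_u/M_u) \cong \frg_u/\frmm$. Invariance under all of $G_u$ forces, at $o$, invariance under the linear isotropy action of $M_u$, and since $M_u$ is connected this is exactly $\frmm$-invariance. Evaluation at $o$ is therefore a linear isomorphism $\Omega^\bcdot(G_u/M_u)^{G_u} \xrightarrow{\;\sim\;} (\Lambda^\bcdot(\frg_u/\frmm)^\ast)^\frmm$, and a direct computation with the intrinsic formula for the exterior derivative of an invariant form, written in terms of fundamental vector fields and their brackets, shows that it intertwines the de Rham differential with the Chevalley--Eilenberg coboundary. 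Passing to cohomology gives the second isomorphism, and composing the two completes the proof.

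The hard part will be the first step: one must verify that the local chain-homotopy operators can be chosen to depend measurably on $g$ so that the integral defining the global homotopy is well-defined, and that $A$ genuinely commutes with $d$ while fixing already-invariant forms. The second step is essentially bookkeeping; its only delicate point is the appeal to connectedness of $M_u$ to upgrade $M_u$-invariance at the base point to $\frmm$-invariance.
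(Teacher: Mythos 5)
Your argument is correct in outline, but note that the paper does not prove this statement at all: it is quoted as a classical result with a pointer to Chevalley--Eilenberg \cite{ChEil}, so what you have written is essentially a reconstruction of the standard proof from that reference rather than an alternative to anything in the paper. The two-step factorization through $\Omega^\bcdot(G_u/M_u)^{G_u}$ is the right skeleton, and both halves are sound: compactness of $G_u$ (automatic here, since $G_u$ is the compact dual) makes the averaging operator $A$ well defined, and connectedness of $M_u$ (built into its definition in the paper) is exactly what lets you pass from $M_u$-invariance at the base point to $\frmm$-invariance; the closedness hypothesis is what makes $G_u/M_u$ a manifold in the first place. The one point you rightly flag---choosing the chain homotopies $h_g$ between $g^\ast$ and the identity so that they can be integrated over $g$---does need an actual mechanism, and there are two standard ones you could cite or spell out: either use that $\exp\colon \frg_u \to G_u$ is surjective for a compact connected group, take a bounded measurable section $g \mapsto \xi(g)$, and set $h_g\alpha = \int_0^1 \exp(t\xi(g))^\ast(\iota_{\xi(g)_X}\alpha)\,dt$ (boundedness gives dominated convergence, so the averaged homotopy is again a smooth form); or avoid pointwise choices altogether by pulling back along the smooth map $G_u\times[0,1]\times (G_u/M_u)\to G_u/M_u$ and using fiber integration, for which Stokes' theorem with boundary terms produces the homotopy globally. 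Either repair closes the only real gap; the second step is, as you say, bookkeeping via the Koszul formula for the differential of an invariant form.
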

\noindent It is not hard to observe that there is an isomorphism $\HH^\bcdot(\frg,\frmm) \cong \HH^\bcdot(\frg_u,\frmm)$. Combining it with \eqref{eq:vanest}, we obtain:
\begin{cor} \label{thm:corollary_cartan}
If $M_u < G_u$ is closed, then $\Hc^\bcdot(G) \cong \HH^\bcdot(G_u/M_u)$. \vspace{5pt}
\end{cor}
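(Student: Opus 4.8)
The plan is to simply concatenate the three isomorphisms that the preceding results already provide. By van Est's theorem (\autoref{thm:vanest}) one has $\Hc^\bcdot(G) \cong \HH^\bcdot(\frg,\frmm)$, and by the theorem of Cartan--Chevalley--Eilenberg (\autoref{thm:cartan}), valid precisely under the hypothesis that $M_u < G_u$ is closed, one has $\HH^\bcdot(\frg_u,\frmm) \cong \HH^\bcdot(G_u/M_u)$. It therefore remains only to supply the middle link $\HH^\bcdot(\frg,\frmm) \cong \HH^\bcdot(\frg_u,\frmm)$, which the two quoted theorems do not themselves furnish; this is the sole step demanding an argument, and it is where I would direct all the effort.

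To establish the middle isomorphism I would pass to complexifications. Since $\frmm \subset \frk$, the subalgebra $\frmm_\bbC := \frmm \otimes \bbC$ lies in $\frk_\bbC$, and the two real Lie algebras $\frg = \frk \oplus \frp$ and $\frg_u = \frk \oplus i\frp$ are, by construction of the compact dual, two real forms of one and the same complex Lie algebra, so that $\frg_\bbC = (\frg_u)_\bbC$ while $\frmm_\bbC$ is the same subalgebra in both. The key general fact I would record is that, for any real Lie algebra $\fra$ with subalgebra $\frb$, tensoring the Chevalley--Eilenberg complex $(\Lambda^\bcdot(\fra/\frb)^\ast)^\frb$ with $\bbC$ produces exactly the complex $(\Lambda^\bcdot(\fra_\bbC/\frb_\bbC)^\ast)^{\frb_\bbC}$: one has $(\fra/\frb)\otimes\bbC = \fra_\bbC/\frb_\bbC$, the coboundary is defined by the identical bracket formula, and a complex-valued form is annihilated by the $\frb$-action if and only if it is annihilated by the $\frb_\bbC$-action, since that action is $\bbC$-linear in the Lie-algebra variable (annihilation by $\frb$ forces annihilation by $i\frb$). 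As $\otimes_\bbR \bbC$ is exact, it commutes with passage to cohomology, whence $\HH^\bcdot(\fra,\frb)\otimes\bbC \cong \HH^\bcdot(\fra_\bbC,\frb_\bbC)$.

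Applying this to the pairs $(\frg,\frmm)$ and $(\frg_u,\frmm)$, and using $\frg_\bbC = (\frg_u)_\bbC$ together with the common $\frmm_\bbC$, I obtain
\begin{equation*}
	\HH^\bcdot(\frg,\frmm)\otimes\bbC \;\cong\; \HH^\bcdot(\frg_\bbC,\frmm_\bbC) \;\cong\; \HH^\bcdot(\frg_u,\frmm)\otimes\bbC.
\end{equation*}
Comparing dimensions degreewise gives $\dim_\bbR \HH^n(\frg,\frmm) = \dim_\bbC \HH^n(\frg_\bbC,\frmm_\bbC) = \dim_\bbR \HH^n(\frg_u,\frmm)$ for every $n$, so the two real complexes have isomorphic cohomology; chaining with the two outer isomorphisms then yields $\Hc^\bcdot(G) \cong \HH^\bcdot(G_u/M_u)$.

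I expect the only real subtlety to sit in the middle step, namely in cleanly justifying that $\frmm$-invariance and $\frmm_\bbC$-invariance coincide and that the two coboundary operators are intertwined by complexification, rather than in the outer isomorphisms, which are imported wholesale from \autoref{thm:vanest} and \autoref{thm:cartan}. I would also flag that the isomorphism produced here rests on a dimension count and is a priori not canonical; since the corollary asserts only an abstract isomorphism of graded vector spaces, this causes no trouble.
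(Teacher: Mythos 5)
Your proposal is correct and follows exactly the route the paper takes: van Est (\autoref{thm:vanest}) on one side, Cartan--Chevalley--Eilenberg (\autoref{thm:cartan}) on the other, linked by the isomorphism $\HH^\bcdot(\frg,\frmm) \cong \HH^\bcdot(\frg_u,\frmm)$, which the paper merely declares ``not hard to observe.'' Your complexification argument for that middle link is a correct and complete justification of the step the paper leaves to the reader.
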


\noindent \textbf{The case of finite center.} We impose now the additional assumption that $G$ has a finite center. Then, by \autoref{rem:Kclosed}, the subgroup $K<G$ fixed above is maximal compact. In particular, 
\begin{equation} \label{eq:fc}
	M=K, \quad \frmm=\frk \qand M_u = K_u.
\end{equation}
Moreover, by the same remark, $G/K$ is a symmetric space of non-compact type. It is a theorem by Cartan that invariant differential forms on a symmetric space are automatically closed. Hence:

\begin{cor} \label{thm:fc}
Under the assumption of finite center of $G$, there exist isomorphisms
\[
	\Hc^\bcdot(G) \cong \Omega^\bcdot(G/K)^G \cong (\Lambda^\bcdot \frp^\ast)^\frk \cong (\Lambda^\bcdot (i\frp)^\ast)^\frk \cong \HH^\bcdot(G_u/K_u). \vspace{8pt}
\]
\end{cor}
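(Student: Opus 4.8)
The plan is to assemble the displayed chain from van Est's theorem, Cartan's vanishing theorem for invariant forms on symmetric spaces, the standard description of invariant forms on a homogeneous space, and \autoref{thm:cartan}. Under the finite-center hypothesis, \eqref{eq:fc} gives $M=K$ and $\frmm=\frk$, so \autoref{thm:vanest} already yields $\Hc^\bcdot(G) \cong \HH^\bcdot_G(G/K) \cong \HH^\bcdot(\frg,\frk)$. By \autoref{rem:Kclosed} the quotient $G/K$ is a symmetric space of non-compact type, and by Cartan's theorem every element of $\Omega^\bcdot(G/K)^G$ is closed; hence the differential of this complex is identically zero, and the complex coincides with its own cohomology $\HH^\bcdot_G(G/K)$. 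This establishes the first isomorphism $\Hc^\bcdot(G) \cong \Omega^\bcdot(G/K)^G$, and throughout the argument it is this vanishing of differentials that lets me treat each cochain complex as a stand-in for its cohomology.

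Next I would identify $\Omega^\bcdot(G/K)^G$ with $(\Lambda^\bcdot\frp^\ast)^\frk$ by evaluation at the base point $eK$. A $G$-invariant form is determined by its value there, and under the Cartan decomposition $\frg = \frk \oplus \frp$ one has $T_{eK}(G/K) \cong \frg/\frk \cong \frp$, with the isotropy action of $K$ corresponding to $\Ad(K)|_\frp$. Thus $G$-invariance of the form translates into $K$-invariance of its value at $eK$, and since $K$ is connected this is the same as $\frk$-invariance for the infinitesimal action. This gives the isomorphism of complexes (both with zero differential) $\Omega^\bcdot(G/K)^G \cong (\Lambda^\bcdot\frp^\ast)^\frk$.

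The isomorphism $(\Lambda^\bcdot\frp^\ast)^\frk \cong (\Lambda^\bcdot(i\frp)^\ast)^\frk$ is the rescaling induced by the linear map $\frp \to i\frp$, $X \mapsto iX$. Because multiplication by $i$ commutes with $\ad(\frk)$, this map intertwines the two $\frk$-actions and hence induces an isomorphism on $\frk$-invariant alternating forms. This is precisely the identification $\HH^\bcdot(\frg,\frk) \cong \HH^\bcdot(\frg_u,\frk)$ recorded immediately after \autoref{thm:cartan}, now read at the level of the differential-free cochain complexes.

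Finally, since $\frg_u = \frk \oplus i\frp$ gives $\frg_u/\frk \cong i\frp$, the complex $(\Lambda^\bcdot(i\frp)^\ast)^\frk$ is exactly the one computing the relative Lie algebra cohomology $\HH^\bcdot(\frg_u,\frk)$. By \autoref{rem:Kuclosed} the subgroup $K_u$ is closed and $G_u/K_u$ is a symmetric space of compact type, so Cartan's vanishing theorem again forces the differential to vanish, while \eqref{eq:fc} identifies $M_u=K_u$. Applying \autoref{thm:cartan} then yields $(\Lambda^\bcdot(i\frp)^\ast)^\frk \cong \HH^\bcdot(\frg_u,\frk) \cong \HH^\bcdot(G_u/K_u)$, closing the chain. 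I do not expect a genuine obstacle here: the statement is essentially a bookkeeping exercise in the cited results, and the only point demanding care is to track consistently where Cartan's theorem is invoked to collapse a cochain complex onto its cohomology, so that the displayed relations are legitimately read as isomorphisms of graded vector spaces rather than merely of cohomology groups.
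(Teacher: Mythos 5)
Your proposal is correct and follows essentially the same route as the paper: van Est's theorem plus Cartan's observation that invariant forms on a symmetric space are closed (so each cochain complex collapses onto its cohomology), combined with the identification $\HH^\bcdot(\frg,\frk)\cong\HH^\bcdot(\frg_u,\frk)$ and \autoref{thm:cartan} on the compact dual side. The paper states this corollary with only a brief justification, and your write-up simply supplies the same steps in more detail.
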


\section{Proof of \autoref{thm:simplefc}} 
The equivalence (3) $\Leftrightarrow$ (4) is a classical fact that holds for any connected Lie group, and goes back to Newlander--Nirenberg. Thus, we will only prove the equivalence (1) $\Leftrightarrow$ (2) $\Leftrightarrow$ (3). A first reduction is provided by the following lemma.

\begin{lem}
If $G$ is a compact, connected, simple Lie group (hence with finite center), then none of the statements \emph{(1)-(4)} in \autoref{thm:simplefc} hold. 
\end{lem}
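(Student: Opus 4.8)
The plan is to refute (1) and (3) by two independent arguments, after which the remaining cases are immediate: once $\Hc^3(G;\bbR) = 0$ is established both (1) and (2) fail, and (4) fails because it is equivalent to (3) by the Newlander--Nirenberg fact already granted above. The point worth flagging at the outset is that this is genuinely an assertion about \emph{continuous} cohomology: the topological group $\HH^3(G;\bbR)$ of a compact simple Lie group is nonzero, so the vanishing below reflects the averaging phenomenon peculiar to $\Hc^\bcdot$, not a defect of the space $G$.

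For (1), I would apply van Est's theorem (\autoref{thm:vanest}) directly. Since $G$ is compact and connected, it is its own maximal compact subgroup, so $M = G$ and $\frmm = \frg$; thus $G/M$ is a point, and \eqref{eq:vanest} yields $\Hc^\bcdot(G) \cong \HH^\bcdot(\frg,\frg)$. The relative cochain complex $(\Lambda^\bcdot(\frg/\frg)^\ast)^\frg$ is concentrated in degree $0$, because $\frg/\frg = 0$, so $\Hc^\bcdot(G)$ vanishes in every positive degree. In particular $\Hc^3(G;\bbR) = 0$, which refutes (1), and since then $\dim \Hc^3(G;\bbR) = 0 \neq 1$, it refutes (2) as well.

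For (3), I would argue by contradiction using the Killing form. As $\frg$ is compact and simple, its Killing form $B_\frg$ is negative definite. Suppose $\frg$ admitted a complex structure $J$. The relations $J^2 = -\id$ and $J \circ \ad_X = \ad_X \circ J$ make $(\frg, J)$ into a complex Lie algebra $\frh$ of complex dimension $n = \tfrac12 \dim_\bbR \frg$ whose realification is $\frg$; since $\frg$ is simple over $\bbR$, $\frh$ is simple over $\bbC$, so its Killing form $B_\frh$ is a nondegenerate, symmetric, $\bbC$-bilinear form. Passing from complex to real traces gives $B_\frg(X,Y) = 2\,\Re B_\frh(X,Y)$. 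Choosing a $\bbC$-basis in which $B_\frh$ is the standard form $\sum_i z_i^2$ and writing $z_i = x_i + \sqrt{-1}\,y_i$, the real part becomes $\sum_i (x_i^2 - y_i^2)$, of split signature $(n,n)$. This contradicts the negative-definiteness of $B_\frg$, so no such $J$ exists: (3) fails, and hence so does (4).

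The delicate points are bookkeeping rather than analytic. First, one must confirm that the equivalences being proved in \autoref{thm:simplefc} are not invoked circularly here: (1) and (3) are each disproved on their own, and only the already-granted equivalence (3) $\Leftrightarrow$ (4) is used downstream. Second, the computation $B_\frg = 2\,\Re B_\frh$ for a realification and the resulting split signature should be recorded with care, since this signature comparison is the real content of the argument—it is exactly what separates a compact real form from the realification of a complex Lie algebra, and is the step I expect to require the most attention.
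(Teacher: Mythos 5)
Your proof is correct. The refutation of (1) and (2) is exactly the paper's: van Est with $M=G$, so the relative complex $(\Lambda^\bcdot(\frg/\frg)^\ast)^\frg$ is concentrated in degree zero and $\Hc^{\bcdot}(G)$ vanishes in all positive degrees. Where you genuinely diverge is in how (3) and (4) are killed. The paper attacks (4) directly: a compact, connected complex Lie group is a complex torus $\bbC^n/\Lambda$, hence Abelian, contradicting simplicity; (3) then fails via the granted Newlander--Nirenberg equivalence. You instead attack (3) directly with a Killing-form signature computation --- $B_\frg = 2\,\Re B_\frh$ for the realification of a complex simple Lie algebra has split signature $(n,n)$, incompatible with the negative-definiteness of the Killing form of a compact simple Lie algebra --- and deduce the failure of (4) from the same equivalence. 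Your trace identity and the diagonalization of a nondegenerate symmetric $\bbC$-bilinear form to $\sum_i z_i^2$ are both correct, so the argument goes through. The trade-off: the paper's route is shorter but leans on the classification of compact connected complex Lie groups as tori, while yours is purely Lie-algebraic and self-contained at the level of bilinear forms; yours also has the minor advantage of refuting (3) without passing through any global statement about the group $G$.
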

\begin{proof}
If $G$ satisfied (4), then by connectedness and compactness, $G$ would be isomorphic to a quotient $\bbC^n/\Lambda$, where $\Lambda < \bbC^n$ is a lattice. In particular, $G$ would be Abelian, contradicting the assumption of simplicity. On the other hand, properties (1) and (2) fail because by \autoref{thm:vanest}, the continuous cohomology of a compact Lie group vanishes in any positive degree.
\end{proof}

Hence, from now on in this section, let $G$ be a \emph{non-compact}, connected, simple Lie group with finite center, and let $\frg$ be its Lie algebra. We argue now in the order (3) $\Rightarrow$ (2) $\Rightarrow$ (1) $\Rightarrow$ (3), where (2) $\Rightarrow$ (1) is evident. 

\subsection{Proof of (3) $\Rightarrow$ (2)} 
Assume that $\frg$ admits a complex structure $J$. Then:
\begin{thm} \label{thm:cartandecJ}
Any compact real form $\frk$ of $\frg$ is simple, maximal compactly embedded in $\frg$, and
\begin{equation} \label{eq:cartandec}
	\frg = \frk \oplus J\!\frk
\end{equation}
is a Cartan decomposition of $\frg$.
\end{thm}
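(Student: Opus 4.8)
The plan is to recognise a real simple Lie algebra equipped with a complex structure as the realification of a \emph{complex} simple Lie algebra, and then read off the Cartan decomposition directly from the complex structure. Declaring $J$ to act as multiplication by $i$ makes $\frg$ into a complex Lie algebra, which I denote $\frg_J$; its complex ideals are exactly the $J$-invariant real ideals of $\frg$, and since $\frg$ is real-simple this forces $\frg_J$ to be simple (and non-abelian). By definition a compact real form $\frk$ of $\frg$ is a compact real form of $\frg_J$, so that $\frg = \frk \oplus i\frk = \frk \oplus J\frk$ as real vector spaces; this splitting is the candidate for \eqref{eq:cartandec}, and the whole proof amounts to verifying it has the three required features.

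Simplicity of $\frk$ I would get from the complexification: an ideal $\mathfrak{a} \subseteq \frk$ gives an ideal $\mathfrak{a} \oplus i\mathfrak{a}$ of $\frk^{\bbC} = \frg_J$, which is simple, so $\mathfrak{a} \in \{0,\frk\}$. The bracket relations of a Cartan decomposition are then immediate from $\bbC$-bilinearity of the bracket of $\frg_J$: since $\frk$ is a subalgebra, $[\frk,\frk] \subseteq \frk$, $[\frk, J\frk] = J[\frk,\frk] \subseteq J\frk$, and $[J\frk, J\frk] = J^2[\frk,\frk] = -[\frk,\frk] \subseteq \frk$; equivalently, the map that is $+\id$ on $\frk$ and $-\id$ on $J\frk$ is an involutive automorphism of $\frg$.

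The crux is the signature of the Killing form $B_\frg$ of the \emph{real} Lie algebra $\frg$ on the two summands, and here I would lean on two standard identities. First, for a realification one has $B_\frg = 2\,\mathrm{Re}\,B_{\frg_J}$, where $B_{\frg_J}$ is the $\bbC$-bilinear Killing form of $\frg_J$, because a $\bbC$-linear endomorphism has real trace equal to twice the real part of its complex trace. Second, the restriction of $B_{\frg_J}$ to a real form is the Killing form of that form, so $B_{\frg_J}|_{\frk \times \frk} = B_\frk$ is real and negative definite---this is precisely the compactness of $\frk$. Combining them: on $\frk$ we get $B_\frg = 2B_\frk < 0$; on $J\frk$, writing a vector as $iX'$ with $X' \in \frk$ and using $B_{\frg_J}(iX',iX') = -B_\frk(X',X')$, we get $B_\frg > 0$; and for $X, Y' \in \frk$ the value $B_{\frg_J}(X, iY') = i\,B_\frk(X,Y')$ is purely imaginary, so $\frk$ and $J\frk$ are $B_\frg$-orthogonal. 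Hence $\frg = \frk \oplus J\frk$ is a Cartan decomposition.

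The maximal compact-embeddedness of $\frk$ I would not prove directly but deduce from the standard correspondence between Cartan decompositions and maximal compactly embedded subalgebras of a semisimple Lie algebra (see \cite{Hel}): the $\frk$-summand of any Cartan decomposition is such a subalgebra, so the decomposition just constructed supplies this property at once. I expect the only delicate point to be the bookkeeping in the previous paragraph---getting the factor $2$, the reality, and the signs in the two Killing-form identities exactly right---whereas the bracket identities and the simplicity of $\frk$ are routine.
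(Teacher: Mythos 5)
Your proof is correct. It differs from the paper in kind rather than in substance: the paper does not argue the statement at all, but delegates it wholesale to Helgason (Corollary III.7.5 for the Cartan decomposition and maximal compact embeddedness, and part of the proof of Theorem VIII.5.4 for the simplicity of $\frk$), whereas you reconstruct the content of those references from first principles. Your two key identities are the right ones and you have the constants and signs correct: for the realification of a complex Lie algebra one has $B_\frg = 2\,\mathrm{Re}\,B_{\frg_J}$ (since an $n\times n$ complex matrix, viewed as a $2n\times 2n$ real matrix, has real trace twice the real part of its complex trace), and the restriction of $B_{\frg_J}$ to the real form $\frk$ is $B_\frk$, which is negative definite by compactness. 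From these, negative definiteness on $\frk$, positive definiteness on $J\frk$, and $B_\frg$-orthogonality of the two summands all follow as you compute, and together with the bracket relations $[\frk,J\frk]\subseteq J\frk$, $[J\frk,J\frk]\subseteq\frk$ (which do require the $\bbC$-bilinearity of the bracket, i.e.\ that $J$ commutes with $\ad$ -- you use this correctly) this is exactly a Cartan decomposition. The simplicity of $\frk$ via the complexification $\mathfrak{a}\oplus i\mathfrak{a}$ and the maximal compact embeddedness via the standard Cartan-decomposition correspondence are both legitimate. What your route buys is a self-contained verification, including an explicit explanation of why the formula \eqref{eq:cocycle} later in the paper has the sign behaviour it does; what the paper's route buys is brevity and an audit trail into a standard reference. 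One tiny point worth making explicit if you write this up: you should note that $\frg_J$ is non-abelian (immediate, since $\frg$ is simple hence non-abelian), so that ``no proper nonzero complex ideals'' really does mean simple rather than one-dimensional abelian.
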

\begin{proof}[About the proof]
The fact that $\frk$ is maximal compactly embedded and that the decomposition above is a Cartan decomposition is Corollary III.7.5 of \cite{Hel}. The simplicity of $\frk$ is part of the proof of Theorem VIII.5.4 in \cite{Hel}.
\end{proof}

Fix a compact real form $\frk$ of $\frg$. By \autoref{thm:fc} (with $\frp = J\!\frk$), there exists isomorphisms $\Hc^3(G) \cong (\Lambda^3 (J\!\frk)^\ast)^\frk \cong (\Lambda^3 \frk^\ast)^\frk$. Let $({\rm V}^2 \frk^\ast)^\frk$ denote the space of $\frk$-invariant, symmetric bilinear forms on $\frk$. We make use of the following fact:
\begin{prop}
The assignment $\Phi: ({\rm V}^2 \frk^\ast)^\frk \to (\Lambda^3 \frk^\ast)^\frk$, defined by
\[
	\Phi_B(X,Y,Z) = B(X,[Y,Z]), \quad \mbox{for } B \in ({\rm V}^2 \frk^\ast)^\frk \mbox{ and } X,Y,Z \in \frk,
\]
is a linear isomorphism.
\end{prop}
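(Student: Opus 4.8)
The plan is to prove the statement by a dimension count: I would show that both $({\rm V}^2\frk^\ast)^\frk$ and $(\Lambda^3\frk^\ast)^\frk$ are one-dimensional and that $\Phi$ is not the zero map; since $\Phi$ is patently linear, this forces it to be an isomorphism. Before any counting, however, one must check that $\Phi$ is even well-defined, i.e. that $\Phi_B$ is genuinely an $\frk$-invariant alternating form whenever $B$ is symmetric and invariant. Infinitesimal invariance of $B$, namely $B([W,X],Y) = -B(X,[W,Y])$ for all $W,X,Y\in\frk$, makes the trilinear form $\Phi_B(X,Y,Z)=B(X,[Y,Z])$ invariant under cyclic permutations of its arguments; since the bracket is antisymmetric, $\Phi_B$ changes sign under the transposition of $Y$ and $Z$. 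Cyclic invariance together with this single sign change forces total antisymmetry, so $\Phi_B\in\Lambda^3\frk^\ast$. That $\Phi_B$ is moreover $\frk$-invariant then follows from the invariance of $B$ and the Jacobi identity by a short direct computation.

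Next I would count dimensions. On the source side, $\frk$ is \emph{simple} (by \autoref{thm:cartandecJ}), and a simple Lie algebra carries an essentially unique invariant symmetric bilinear form; hence $({\rm V}^2\frk^\ast)^\frk$ is one-dimensional, spanned by the Killing form $B_\frk$. On the target side, I would use that $\frk$ is of compact type, so that it carries an invariant inner product and the Chevalley--Eilenberg differential vanishes identically on the invariant subcomplex $(\Lambda^\bcdot\frk^\ast)^\frk$; this is the classical theorem identifying invariant forms with cohomology. Consequently $(\Lambda^3\frk^\ast)^\frk\cong\HH^3(\frk)$, and since the degree-three cohomology of a compact connected simple Lie group (equivalently, of its Lie algebra) is one-dimensional, generated by the Cartan $3$-form --- see \cite{GHV} --- the target is one-dimensional as well.

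With both spaces one-dimensional, it remains only to verify that $\Phi\neq 0$, for which it suffices to exhibit a single $B$ with $\Phi_B\neq 0$. Taking $B=B_\frk$, the nondegeneracy of the Killing form together with $[\frk,\frk]=\frk$ (a consequence of simplicity) yields $Y,Z$ with $[Y,Z]\neq 0$ and then $X$ with $B_\frk(X,[Y,Z])\neq 0$, so $\Phi_{B_\frk}\neq 0$. Being an injective linear map between one-dimensional spaces, $\Phi$ is therefore an isomorphism.

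The only genuinely nontrivial input is the one-dimensionality of $(\Lambda^3\frk^\ast)^\frk$ --- equivalently, that the trivial representation occurs with multiplicity exactly one in $\Lambda^3\frk$ --- and I expect this to be the main obstacle; everything else is formal. The cleanest route is to invoke the known structure of the cohomology of compact simple Lie groups rather than to argue the multiplicity by hand. A type-by-type inspection of the exponents confirms that a degree-three generator is always present and unique, but since the paper emphasizes that its proofs avoid the classification, I would rely on the cohomological citation instead.
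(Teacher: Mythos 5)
Your proof is correct, but it takes a genuinely different route from the paper: the paper does not argue the proposition at all, it simply quotes it as Proposition I in Section 5.7 of \cite{GHV} (valid for any compact simple Lie algebra), whereas you supply an actual argument. Your verifications are sound: invariance of $B$ gives cyclic invariance of $\Phi_B$, antisymmetry of the bracket gives the sign change under one transposition, and these generate $S_3$ compatibly with the sign character, so $\Phi_B$ is alternating; $\frk$-invariance follows from the Jacobi identity; the source is one-dimensional by simplicity; and $\Phi_{B_\frk}\neq 0$ by nondegeneracy of the Killing form and $[\frk,\frk]=\frk$. The one point to handle with care is your key input, $\dim(\Lambda^3\frk^\ast)^\frk=\dim\HH^3(\frk)=1$: in \cite{GHV} and in many other sources this very fact is \emph{derived from} the isomorphism $\Phi$ you are trying to prove, so citing it as a black box risks circularity. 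The risk is avoidable, and you should say which non-circular source you use --- for instance Hopf's theorem plus transgression (degree-three primitive generators correspond to degree-two invariant polynomials, a one-dimensional space by simplicity), or, staying entirely within tools the paper already quotes, $\pi_2(K)=0$ and $\pi_3(K)\cong\bbZ$ from \autoref{thm:pi23} together with the rational Hurewicz theorem applied to the universal cover. What your approach buys is a self-contained proof that makes visible exactly where simplicity of $\frk$ enters on each side of the dimension count; what the paper's citation buys is brevity and an appeal to a reference that in fact constructs the inverse of $\Phi$ explicitly.
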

\begin{proof}[About the proof]
This is Proposition I in Section 5.7 of \cite{GHV}. The statement holds if $\frk$ is any compact, simple Lie algebra. 
\end{proof}
\vspace{0.2cm}
We conclude the proof of the implication by pointing out that the space $({\rm V}^2 \frk^\ast)^\frk$ has dimension one because of the simplicity of $\frk$; a generator of it is the Killing form of $\frk$. A proof of this fact can be found, for example, in the discussion  at the beginning of VIII.\S 5 of \cite{Hel}. 
\vspace{0.6cm}

\subsection{Proof of (1) $\Rightarrow$ (3)}
Assume now that $\frg$ does not admit a complex structure. We adopt the same notation of \eqref{eq:fc} and \autoref{thm:fc}, and fix:
\begin{itemize}
        \item a maximal compact subgroup $K < G$,
        \item its Lie algebra $\frk \subset \frg$,
	\item an associated Cartan decomposition $\frg = \frk \oplus \frp$ of $\frg$,
	\item the compact dual $\frg_u := \frk \oplus i\frp$ of $\frg$,
	\item the compact dual $G_u$ of $G$, and
	\item the connected subgroup $K_u$ of $G_u$ with Lie algebra $\frk$. \vspace{10pt}
\end{itemize}
The starting point for the proof of this implication is the following theorem.  
\begin{thm} \label{thm:gu_simple}
\hspace{-0.7cm} \begin{minipage}[t]{12.9cm}
\vspace{-8pt} \begin{enumerate}[label=\emph{(\roman*)}]
\item The compact dual $\frg_u$ is simple.
\item The symmetric space of compact type $G_u/K_u$ is irreducible. In particular, the action of  ${\rm Ad}(K_u)$ on the vector space $i\frp$ is irreducible.
\end{enumerate} 
\end{minipage}
\end{thm}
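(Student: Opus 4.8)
The plan is to reduce both parts to a single algebraic fact: the hypothesis that $\frg$ carries no complex structure forces the complexification $\frg_{\bbC} := \frg \otimes_{\bbR} \bbC$ to be \emph{simple}, after which the simplicity of $\frg_u$ and the irreducibility of $G_u/K_u$ follow formally. For part (i) I would first recall the standard dichotomy for a real simple Lie algebra $\frg$: writing $\sigma$ for the conjugation of the semisimple algebra $\frg_{\bbC}$ with respect to the real form $\frg$, either $\frg_{\bbC}$ is already simple, or it is the sum of two simple ideals interchanged by $\sigma$. In the latter case projection to one factor identifies $\frg$ with the realification of a complex simple Lie algebra, so $\frg$ inherits a complex structure; conversely, a complex structure $J$ on $\frg$ makes $\frg$ a complex Lie algebra and decomposes $\frg_{\bbC}$ into the two ideals given by the $\pm i$-eigenspaces of $J \otimes 1$. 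Thus ``$\frg$ admits no complex structure'' is equivalent to ``$\frg_{\bbC}$ is simple.'' Since $\frg_u \otimes_{\bbR} \bbC = \frk_{\bbC} \oplus (i\frp)_{\bbC} = \frg_{\bbC}$, the algebra $\frg_u$ is another real form of $\frg_{\bbC}$; and any real form of a simple complex Lie algebra is simple, because an ideal of the real form complexifies to an ideal of $\frg_{\bbC}$ and the real form is visibly non-abelian. This settles (i).

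For part (ii), recall from \autoref{rem:Kuclosed} that $G_u/K_u$ is a symmetric space of compact type, whose isotropy representation is the action of $\Ad(K_u)$---equivalently of $\ad(\frk)$---on the tangent space $i\frp$. I would establish irreducibility of this representation directly from the simplicity of $\frg_u$. Suppose it were reducible, and let $W_1 \subsetneq i\frp$ be a nonzero $\ad(\frk)$-invariant subspace with $W_2$ its orthogonal complement in $i\frp$ relative to the positive definite form $-B_{\frg_u}$; then $W_2$ is also invariant and nonzero. Using the $\ad$-invariance of the Killing form together with the $\ad(\frk)$-invariance of $W_1$, one finds $B_{\frg_u}([W_1,W_2],\frk) = 0$; as $[W_1, W_2] \subseteq \frk$ and $B_{\frg_u}$ is nondegenerate on $\frk$, this gives $[W_1, W_2] = 0$. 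A short computation using this vanishing and the bracket relations $[\frk, i\frp] \subseteq i\frp$, $[i\frp, i\frp] \subseteq \frk$ then shows that $[W_1, W_1] \oplus W_1$ is an ideal of $\frg_u$; it is nonzero and proper, since its component in $i\frp$ is exactly $W_1 \neq i\frp$, contradicting the simplicity from (i). Hence $i\frp$ is $\ad(\frk)$-irreducible, which is the final assertion; and since irreducibility of the isotropy representation is equivalent to irreducibility of a symmetric space of compact type, the space $G_u/K_u$ is irreducible as claimed.

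The only genuinely nontrivial ingredient is the dichotomy invoked in (i)---that the complexification of a real simple Lie algebra is either simple or a $\sigma$-swapped pair of simple ideals---together with the care needed to see that $\sigma$ permutes the simple ideals of $\frg_{\bbC}$, so that simplicity of $\frg$ leaves only those two possibilities; this is pure structure theory and uses no classification. Everything downstream---the passage to real forms, the identity $[W_1, W_2] = 0$, and the ideal construction---is routine, and the equivalence between isotropy irreducibility and irreducibility of a symmetric space of compact type is standard (see \cite{Hel}).
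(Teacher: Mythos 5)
Your argument is correct, but it is worth noting that the paper does not actually prove this theorem: its ``About the proof'' merely points to Theorems VIII.5.3 and V.2.4 of \cite{Hel} (and the definition of irreducibility at the start of VIII.\S 5), so what you have written is essentially a self-contained reconstruction of the structure theory that those references encapsulate. Your route is the standard one and all the steps check out: the dichotomy for $\frg_{\bbC}$ (simple, or two simple ideals swapped by the conjugation $\sigma$, the latter case being exactly when $\frg$ carries a complex structure) correctly reduces (i) to the fact that a real form of a simple complex Lie algebra is simple, since an ideal of the real form complexifies to an ideal of $\frg_{\bbC}$ of the same dimension; and for (ii) the orthogonal-complement argument is sound --- $B_{\frg_u}$ is negative definite, $\frk \perp i\frp$, associativity of the Killing form gives $[W_1,W_2]=0$, and the Jacobi identity then makes $[W_1,W_1]\oplus W_1$ a nonzero proper ideal, contradicting (i). The only point deserving a little more care is the equivalence you invoke at the very end between isotropy-irreducibility and irreducibility of the symmetric space: in Helgason's setup this is essentially the definition of an irreducible orthogonal symmetric Lie algebra, so nothing is missing, but you should say so rather than treat it as a separate theorem. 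What your approach buys over the paper's is transparency and independence from the classification-flavored machinery of \cite[VIII.\S 5]{Hel}; what it costs is length, which is presumably why the author chose to cite.
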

\begin{proof}[About the proof]
Part (i) is a combination of Theorem VIII.5.3 and Theorem V.2.4 of \cite{Hel}. Part (ii) is a consequence of Theorem VIII.5.3 of \cite{Hel}; see also the Definition at the beginning of VIII.\S 5 of \cite{Hel}.
\end{proof}

\begin{rem}
Note that (i) does not hold for Lie algebras that do admit a complex structure. For example, the compact dual of $\fsl(2,\bbC)$ is the Lie algebra $\fsu(2) \oplus \fsu(2) \cong \fso(4,\bbR)$. \vspace{4pt}
\end{rem}

By \autoref{thm:fc}, we have $\Hc^3(G) \cong \HH^3(G_u/K_u)$. Thus it suffices to show that $\HH^3(G_u/K_u)$ vanishes. We distinguish two cases: \vspace{6pt}

\noindent \textbf{Case 1: $\frk$ is Abelian.} The following proposition establishes the claim.

\begin{prop} \label{thm:abelian}
The symmetric space $G_u/K_u$ is diffeomorphic to the 2-dimensional sphere $S^2$.  
\end{prop}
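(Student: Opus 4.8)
The plan is to determine the isomorphism types of $\frk$ and $\frg_u$ purely from the hypothesis that $\frk$ is abelian, reducing to a three-dimensional situation, and then to identify the quotient $G_u/K_u$ by a soft topological argument rather than by matching it against a list of symmetric spaces. The two inputs I would lean on are the Cartan decomposition $\frg = \frk \oplus \frp$ (equivalently $\frg_u = \frk \oplus i\frp$) and \autoref{thm:gu_simple}(ii), which guarantees that $i\frp$ is an \emph{irreducible} real $\Ad(K_u)$-module.

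First I would record two structural facts forced by simplicity and non-compactness of $\frg$. On one hand $\frk \neq 0$, since $\frk = 0$ would give $[\frp,\frp] \subset \frk = 0$ and make $\frg$ abelian. On the other hand the action of $\frk$ on $\frp$ is non-trivial: if $[\frk,\frp] = 0$, then $[\frk,\frg] = [\frk,\frk] \subset \frk$ would exhibit $\frk$ as a non-zero proper ideal of the simple algebra $\frg$. Passing to $i\frp$ inside $\frg_u$, the isotropy representation of $\frk$ on $i\frp$ is therefore non-trivial.

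The heart of the argument is then a representation-theoretic step. Since $\frk$ is abelian and $K_u$ is compact and connected, $K_u$ is a torus, and by \autoref{thm:gu_simple}(ii) the real vector space $i\frp$ is an irreducible, non-trivial $K_u$-module. A non-trivial irreducible real representation of a torus is two-dimensional: after complexification the weights appear in conjugate pairs, while a one-dimensional real representation of a compact connected group is necessarily trivial. Hence $\dim i\frp = 2$, and the resulting homomorphism $\frk \to \fso(i\frp) \cong \fso(2)$ is faithful, because any $X \in \frk$ in its kernel satisfies $[X,\frp] = 0$ and $[X,\frk] = 0$, hence lies in the (trivial) center of the simple algebra $\frg$. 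Consequently $\dim \frk = 1$ and $\dim \frg_u = 3$.

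Finally I would identify the quotient topologically. The space $G_u/K_u$ is a compact manifold of dimension $\dim i\frp = 2$; since $G_u$ is $1$-connected and $K_u$ is connected, the long exact homotopy sequence of the fibration $K_u \to G_u \to G_u/K_u$ shows that $G_u/K_u$ is simply connected. By the classification of closed smooth surfaces, a simply connected one is diffeomorphic to $S^2$, which is the claim. I expect the only genuinely substantive step to be the representation-theoretic one above; the remainder is bookkeeping with the Cartan decomposition together with this topological identification, which has the pleasant feature of avoiding any appeal to the classification of simple Lie groups, in keeping with the stated aim of the paper.
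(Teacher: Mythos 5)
Your argument is correct, and its overall skeleton coincides with the paper's: use \autoref{thm:gu_simple}(ii) together with the commutativity of $K_u$ to force $\dim_\bbR i\frp = 2$, then observe that $G_u/K_u$ is a $1$-connected closed surface and invoke the classification of surfaces. The one place where you genuinely diverge is the dimension count. The paper picks an element $j \in K_u$ of order four, notes that $\Ad_{G_u}(j)|_{i\frp}$ is a complex structure, and applies Schur's lemma over $\bbC$ to the (now $\bbC$-linear, by commutativity) irreducible $\Ad(K_u)$-action to get $\dim_\bbC i\frp = 1$. You instead quote the classification of irreducible real representations of a torus: the non-trivial ones are two-dimensional. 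For that you need non-triviality of the isotropy representation, which you correctly extract from the simplicity of $\frg$ (if $[\frk,\frp]=0$ then $\frk$ is a proper non-zero ideal); this small preliminary step is the price of your route, but in exchange you avoid having to justify that $\Ad(j)$ squares to $-\id$ on $i\frp$ rather than merely having order dividing four --- a point the paper's proof passes over rather quickly. Your computation that $\frk \hookrightarrow \fso(2)$ is faithful, hence $\dim\frg_u = 3$, is correct but not needed for the conclusion.
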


\begin{proof}
The group $K_u$ is Abelian; because it is a non-trivial torus, it contains an element $j$ of order four. The image of $j$ under the adjoint representation ${\rm Ad}_{G_u}(j)|_{i\frp}$ is a complex structure on the real vector space $i\frp$. Thus, we regard $i\frp$ now as a $\bbC$-vector space. By the commutativity of $K_u$, the ${\rm Ad}(K_u)$-action on $i\frp$ is $\bbC$-linear. By (ii) of \autoref{thm:gu_simple} and Schur's lemma, we obtain that $\dim_\bbC i\frp = 1$.  In consequence,
\[
	\dim(G_u/K_u) = \dim_\bbR i\mathfrak{p}= 2 \dim_\bbC i\frp = 2.
\]
Since $G_u$ is 1-connected and $K_u$ is connected, the space $G_u/K_u$ is 1-connected, and we conclude by the classification of surfaces.
\end{proof}
\vspace{0.2cm}
\noindent \textbf{Case 2: $\frk$ is non-Abelian.} The claim follows from the next proposition with $U = G_u$ and $L = K_u$.

\begin{prop} \label{thm:nonabelian}
Let $U$ be a 1-connected, compact, simple Lie group, and $L < U$ be a connected, closed, non-Abelian subgroup. Then $\HH^3(U/L) = 0$.
\end{prop}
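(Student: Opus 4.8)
The plan is to compute everything in real (de Rham/singular) cohomology and run the cohomological Serre spectral sequence of the fibration $L \hookrightarrow U \to U/L$, the decisive input being the behaviour of the Cartan $3$-form of $U$ under restriction to $L$. First I would assemble the homotopy-theoretic facts. Writing $\fru,\frl$ for the Lie algebras of $U,L$ and $\mathfrak{s}:=[\frl,\frl]$, $\frz$ for the semisimple part and the centre of $\frl$, I note that $\mathfrak{s}\neq 0$ since $L$ is non-Abelian. As $U$ is $1$-connected and $L$ is connected, the long exact homotopy sequence gives $\pi_1(U/L)=0$; thus $U/L$ is simply connected and the local coefficient systems in the spectral sequence are trivial. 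Since $\fru$ is simple, Whitehead's lemmas give $\HH^1(U)=\HH^2(U)=0$, while $\HH^3(U)\cong\bbR$, generated by the Cartan $3$-form $\omega_U(X,Y,Z)=B(X,[Y,Z])$ for a bi-invariant inner product $B$ on $\fru$ (the one-dimensionality mirrors the Proposition on $\Phi$ already invoked, applied to $\fru$). For the fibre, $\HH^1(L)\cong\frz^\ast$ and $\HH^2(L)\cong\Lambda^2\frz^\ast=\Lambda^2\HH^1(L)$, the last equality because $\HH^1(\mathfrak{s})=\HH^2(\mathfrak{s})=0$.

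Next I would feed these into $E_2^{p,q}=\HH^p(U/L)\otimes\HH^q(L)\Rightarrow\HH^{p+q}(U)$ and show that the entry $E^{3,0}$, which is $\HH^3(U/L)$, is untouched by all differentials. No differential leaves $E^{3,0}$ (the targets lie in negative rows), so only incoming ones matter: the $d_2$ from $E_2^{1,1}=\HH^1(U/L)\otimes\HH^1(L)=0$ and the $d_3$ from $E_3^{0,2}$. Because $\HH^1(U)=0$, the transgression $\tau\colon \HH^1(L)\to\HH^2(U/L)$ is injective; by multiplicativity of the spectral sequence together with $\HH^2(L)=\Lambda^2\HH^1(L)$, this forces $E_3^{0,2}\cong\Lambda^2(\ker\tau)=0$. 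Hence no differential alters $E^{3,0}$, so $E_\infty^{3,0}=E_2^{3,0}=\HH^3(U/L)$ and the edge map $\pi^\ast\colon\HH^3(U/L)\to\HH^3(U)$ is injective with image $E_\infty^{3,0}=F^3\HH^3(U)$.

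Finally I would pin down that image using the Cartan form. The other edge homomorphism $\HH^3(U)\to\HH^3(L)$ is restriction to the fibre $i^\ast$, with kernel $F^1\HH^3(U)$. Now $i^\ast\omega_U$ is the invariant form $(X,Y,Z)\mapsto B(X,[Y,Z])$ on $\frl$, and its restriction to $\mathfrak{s}$ is nonzero: $B|_{\mathfrak{s}}$ is negative definite, hence nondegenerate, while $[\mathfrak{s},\mathfrak{s}]=\mathfrak{s}\neq 0$, so one can choose $X,Y,Z\in\mathfrak{s}$ with $B(X,[Y,Z])\neq 0$. Since on a compact group the invariant forms inject into cohomology, $i^\ast\omega_U\neq 0$ in $\HH^3(L)$, so $\omega_U\notin F^1\HH^3(U)$; as $\HH^3(U)$ is one-dimensional this gives $F^1\HH^3(U)=0$. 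Therefore the image of $\pi^\ast$ is $E_\infty^{3,0}=F^3\HH^3(U)\subseteq F^1\HH^3(U)=0$, and since $\pi^\ast$ is injective we conclude $\HH^3(U/L)=0$.

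I expect the main obstacle to be the third step: correctly identifying the generator of $\HH^3(U)$ with the Cartan $3$-form, verifying that its restriction to the non-Abelian subalgebra $\frl$ is a nonzero invariant form, and legitimately promoting that to non-vanishing of $i^\ast\omega_U$ in cohomology. The secondary technical point is the spectral-sequence bookkeeping guaranteeing that $E^{3,0}$ is left unchanged; this is precisely where simple-connectivity of $U/L$ (killing $E_2^{1,1}$) and the vanishing $\HH^1(U)=\HH^2(U)=0$ (giving injectivity of $\tau$ and hence $E_3^{0,2}=0$) are used.
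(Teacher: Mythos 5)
Your argument is correct, and it takes a genuinely different route from the paper. The paper works integrally and homotopy-theoretically: it uses the long exact homotopy sequence of $L \hookrightarrow U \twoheadrightarrow U/L$ together with Bott's facts $\pi_2(L)=0$, $\pi_3(U)\cong\bbZ$ to reduce to showing $\iota_\#\colon\pi_3(L)\to\pi_3(U)$ has non-trivial image, which it establishes by restricting to a simple factor $L_1$ of $L$ and invoking the non-vanishing of the Dynkin index of $L_1\to U$; finiteness of $\pi_3(U/L)$ then gives finiteness of $\HH_3(U/L;\bbZ)$ via Hurewicz and hence $\HH^3(U/L;\bbR)=0$ by universal coefficients. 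You instead stay entirely in real cohomology and run the multiplicative Serre spectral sequence, using $\HH^1(U/L)=0$ and the injectivity of the transgression $\tau$ (forced by $\HH^1(U)=0$ together with $\HH^2(L)=\Lambda^2\HH^1(L)$) to see that $E_2^{3,0}$ survives, and then killing $F^1\HH^3(U)$ by checking that the Cartan $3$-form restricts non-trivially to the non-Abelian fibre $L$ --- the computation $B([Y,Z],[Y,Z])\neq 0$ on the semisimple part of $\frl$ is exactly right, and the passage from a non-zero bi-invariant form to a non-zero class is legitimate on a compact connected group. The two proofs are morally dual: the paper detects non-triviality of the inclusion on $\pi_3$ via the Dynkin index, you detect it on $\HH^3$ via restriction of the Cartan form. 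The paper's version buys finer integral information (it actually identifies $\pi_3(U/L)$ as $\bbZ/j_\psi\bbZ$), at the cost of importing Bott's theorem and Onishchik's results on the Dynkin index; yours is more self-contained given standard spectral-sequence technology and connects more directly to the explicit cocycle \eqref{eq:cocycle} featured in the introduction.
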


\noindent We point out that $U/L$ needs not be a symmetric space. We give a proof of this proposition in the next subsection. \vspace{0.4cm}

\subsection{Proof of \autoref{thm:nonabelian}} We will need three facts about the topology of Lie groups.

\begin{thm}[Weyl] \label{thm:weyl}
The universal covering group of a compact semisimple Lie group is compact. In particular, any Lie group with a compact, semisimple Lie algebra is compact. 
\end{thm}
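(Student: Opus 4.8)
The plan is to give the standard Riemannian-geometric argument, via a Ricci curvature estimate together with the Bonnet--Myers theorem. Let $G$ be a compact semisimple Lie group with Lie algebra $\frg$. Because $G$ is compact, $\frg$ carries an $\Ad(G)$-invariant inner product, so its Killing form $B$ is negative semidefinite; and since $\frg$ is semisimple, Cartan's criterion makes $B$ nondegenerate, hence negative definite. First I would therefore equip $G$ with the bi-invariant metric $\langle\cdot,\cdot\rangle := -B$, which is both left-invariant and $\Ad$-invariant by construction.

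The key computation is the Ricci curvature of this metric. For a bi-invariant metric the Levi--Civita connection on left-invariant fields is $\nabla_X Y = \tfrac12[X,Y]$, and the curvature tensor is $R(X,Y)Z = -\tfrac14[[X,Y],Z]$. Using the $\ad$-invariance $\langle[X,U],V\rangle = -\langle U,[X,V]\rangle$ (equivalently, that each $\ad_X$ is skew-symmetric), a short computation with an orthonormal basis $\{e_i\}$ of $\frg$ yields
\[
	\operatorname{Ric}(X,X) = \tfrac14 \sum_i \|[X,e_i]\|^2 = -\tfrac14 \operatorname{tr}(\ad_X \ad_X) = -\tfrac14 B(X,X) = \tfrac14 \langle X,X\rangle .
\]
Thus $(G,\langle\cdot,\cdot\rangle)$ is Einstein with $\operatorname{Ric} = \tfrac14\langle\cdot,\cdot\rangle$; in particular its Ricci curvature is bounded below by the positive constant $\tfrac14$.

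Next I would pull the metric back along the covering map $\pi\colon \widetilde{G}\to G$. Since $\pi$ is a local isometry, $\widetilde{G}$ inherits a complete bi-invariant metric with the identical lower Ricci bound $\operatorname{Ric}\geq \tfrac14 > 0$, so the Bonnet--Myers theorem applies verbatim and forces $\widetilde{G}$ to have finite diameter, hence to be compact. This proves the first assertion. For the ``in particular'' clause, let $H$ be any connected Lie group whose Lie algebra $\frg$ is compact and semisimple. There exists a compact semisimple group with Lie algebra $\frg$---for instance the adjoint group $\Ad(H)$, which preserves $-B$ and is therefore closed in the compact group $\OO(\frg,-B)\cap\Aut(\frg)$---so by the first part the simply connected group $\widetilde{G}$ with Lie algebra $\frg$ is compact. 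As $\widetilde{G}$ is also the universal cover of $H$, we have $H\cong \widetilde{G}/\Gamma$ for a discrete central subgroup $\Gamma$; a discrete subgroup of a compact group is finite, so $H$ is compact.

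The main obstacle---indeed essentially the only technical point---is the curvature identity $\operatorname{Ric} = -\tfrac14 B$; once it is in hand, everything else is formal. I would emphasize that semisimplicity is not cosmetic here: it is exactly what makes $B$ nondegenerate and hence the Einstein constant strictly positive. Without it the Ricci form can degenerate---the universal cover of a torus is the noncompact $\bbR^n$---so no diameter bound, and no compactness, can be expected.
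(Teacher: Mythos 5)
Your proof is correct, but note that the paper itself offers no argument for this theorem: its ``About the proof'' is a pure citation to Theorem II.6.9 of Helgason, so there is nothing internal to compare against. What you give is the classical Riemannian proof of Weyl's theorem---bi-invariant metric $-B$, the curvature formulas for bi-invariant metrics, the Einstein identity $\operatorname{Ric}=-\tfrac14 B=\tfrac14\langle\cdot,\cdot\rangle$, and Bonnet--Myers applied to the (complete) pulled-back metric on $\widetilde{G}$---and all the computations check out with the stated conventions. This buys self-containedness at the price of importing Myers' theorem; the paper's choice to cite instead is consistent with its overall style, which treats all such Lie-theoretic facts as black boxes. Two small points are worth tightening. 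First, the ``in particular'' clause is false for disconnected groups (e.g.\ $\SU(2)\times\bbZ$ has compact semisimple Lie algebra), so your insertion of ``connected'' is not cosmetic but necessary; this matches the paper's implicit convention, since it only ever applies the theorem to connected subgroups. Second, your justification that $\Ad(H)$ is compact is slightly off as stated: preserving $-B$ gives containment in the compact group $\OO(\frg,-B)$, not closedness. Closedness follows instead from the identification $\Ad(H)=\operatorname{Int}(\frg)$ with the identity component of $\Aut(\frg)$ (valid since all derivations of a semisimple $\frg$ are inner), $\Aut(\frg)$ being a closed subgroup of $\GL(\frg)$ and identity components of Lie groups being closed. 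With that repaired, the argument is complete.
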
 
\begin{proof}[About the proof]
A proof of this theorem is found in \cite{Hel} as Theorem II.6.9.
\end{proof}
\vspace{0.1cm}
\begin{thm}[Bott] \label{thm:pi23} 
For any connected Lie group $H$, the second homotopy group $\pi_2(H)$ vanishes. If, moreover, $H$ is simple, then $\pi_3(H) \cong \bbZ$. 
\end{thm}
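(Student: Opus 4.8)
The plan is to reduce both assertions to the case of a compact, simply connected, \emph{simple} group and then to compute with classical algebraic topology. Since a covering homomorphism induces isomorphisms on $\pi_n$ for $n\geq 2$, I may replace $H$ by its universal cover $\widetilde H$. By the Cartan--Iwasawa--Malcev decomposition $\widetilde H$ is diffeomorphic to $K\times\bbR^m$ with $K$ a maximal compact subgroup, necessarily simply connected; hence $\pi_n(H)\cong\pi_n(K)$ for $n\geq 2$, and it suffices to treat compact, simply connected groups. Such a $K$ is a finite product of compact, simply connected, simple groups, and since $\pi_2$ and $\pi_3$ are additive over direct products, the first assertion reduces to proving $\pi_2(G)=0$ for a single compact, simply connected, simple $G$. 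For the second assertion, which is applied in this note only to compact groups, simplicity of $H$ together with \autoref{thm:weyl} lets me pass to the compact, simple, simply connected universal cover $G$ with $\pi_3(H)\cong\pi_3(G)$; so it suffices to show $\pi_3(G)\cong\bbZ$ for such $G$.

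For $\pi_2$, fix a maximal torus $T\subset G$ of rank $r$ and consider the fibration $T\to G\to G/T$. The homogeneous space $G/T$ is the flag manifold: it is $G$-equivariantly diffeomorphic to $G_\bbC/B$, a smooth projective variety whose Bruhat (Schubert) decomposition has cells only in even real dimension. Consequently $G/T$ is simply connected and $H_2(G/T;\bbZ)$ is free of rank $r$ (one $2$-cell per simple root), so by Hurewicz $\pi_2(G/T)\cong\bbZ^r$. The long exact homotopy sequence of the fibration reads
\[
0=\pi_2(T)\longrightarrow \pi_2(G)\longrightarrow \pi_2(G/T)\xrightarrow{\partial}\pi_1(T)\longrightarrow \pi_1(G)=0 .
\]
Thus $\partial$ is a surjection $\bbZ^r\twoheadrightarrow\pi_1(T)\cong\bbZ^r$ between free abelian groups of the same finite rank, hence an isomorphism, and $\pi_2(G)=\ker\partial=0$.

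Now let $G$ be compact, simple and simply connected. By the previous paragraph $\pi_2(G)=0$, so $G$ is $2$-connected and the Hurewicz theorem gives $\pi_3(G)\cong H_3(G;\bbZ)$. To compute the rank I use that the de Rham cohomology of the compact group $G$ is computed by left-invariant forms, i.e.\ $\HH^\bcdot(G;\bbR)\cong\HH^\bcdot(\frg)$ (this is \autoref{thm:cartan} with trivial subalgebra). For $\frg$ simple the Whitehead lemmas give $\HH^1(\frg)=\HH^2(\frg)=0$, while the Cartan--Killing $3$-form $(X,Y,Z)\mapsto B_\frg(X,[Y,Z])$ --- exactly the form appearing in \eqref{eq:cocycle} --- spans $\HH^3(\frg)\cong\bbR$. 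Hence $H_3(G;\bbZ)$ has rank one. Finally, the homomorphism induced on $\pi_3$ by a root $\SU(2)$-subgroup $\SU(2)\to G$ is surjective, so $\pi_3(G)$ is cyclic; a cyclic group of rank one is infinite cyclic, giving $\pi_3(G)\cong\bbZ$.

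The genuinely delicate steps are the \emph{integral} ones: that the Schubert decomposition of $G/T$ has no odd cells (so that $\pi_2(G/T)$ is free of the right rank and $\partial$ is forced to be an isomorphism), and that $H_3(G;\bbZ)$ is free of rank exactly one --- the rational statement being soft, via $\HH^3(\frg)\cong\bbR$, but the absence of torsion requiring the surjectivity of $\pi_3(\SU(2))\to\pi_3(G)$. I expect this last point to be the main obstacle, and it is precisely what Bott's original argument handles most cleanly: working Morse-theoretically on the based loop space via $\pi_k(G)\cong\pi_{k-1}(\Omega G)$, the space of minimal geodesics together with an index estimate identifies the relevant low homotopy directly and yields $\pi_3(G)\cong\bbZ$ without a separate torsion discussion.
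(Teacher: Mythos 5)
Your reduction to the compact, 1-connected, simple case is correct, and your flag-manifold proof that $\pi_2(G)=0$ is a genuine alternative to the paper's route: the paper deduces both assertions at once from Bott's Morse-theoretic theorem that $\Omega G$ has the homotopy type of a CW-complex with only even-dimensional cells (Theorem 21.6 of \cite{Mil}, concluded as in \cite{MO}), which gives $\pi_2(G)\cong\pi_1(\Omega G)=0$ and $\pi_3(G)\cong\pi_2(\Omega G)\cong \HH_2(\Omega G;\bbZ)$ free, with the rank then pinned down rationally. Your use of the Bruhat decomposition of $G_\bbC/B$ buys the $\pi_2$ statement without any Morse theory, and you were also right to read the $\pi_3$ assertion as a statement about \emph{compact} simple groups only; as literally stated it fails, e.g., for the contractible simple group $\widetilde{\SL(2,\bbR)}$, and the paper indeed only ever applies it to compact groups.

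The gap is in your torsion argument for $\pi_3$. The assertion that ``the homomorphism induced on $\pi_3$ by a root $\SU(2)$-subgroup $\SU(2)\to G$ is surjective'' is both unproved and, as stated, false: for a short-root $\SU(2)$ in a non-simply-laced group (e.g.\ $\Sp(n)$ or $G_2$) the image of $\pi_3(\SU(2))\to\pi_3(G)$ has index $2$ or $3$ --- this index is exactly the Dynkin index of the embedding --- so at minimum you must take the highest-root $\SU(2)$, and that \emph{this} copy generates $\pi_3(G)$ is itself a theorem of Bott of essentially the same depth as the statement you are proving. You also cannot repair the step with the Dynkin-index machinery quoted later in the paper, since part (ii) of \autoref{thm:dynkinindex} presupposes that $\pi_3$ of both groups is already known to be infinite cyclic; the argument would be circular. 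Your rank computation via $\HH^3(\frg)\cong\bbR$ is sound, so what is missing is precisely the freeness (equivalently, cyclicity) of $\HH_3(G;\bbZ)\cong\pi_3(G)$, and your closing paragraph in effect concedes this by deferring to Bott's analysis of $\Omega G$. That deferral is the correct fix, but it is exactly the citation on which the paper's own proof rests: the even-cell structure of $\Omega G$ makes $\pi_3(G)\cong\pi_2(\Omega G)\cong\HH_2(\Omega G;\bbZ)$ free, after which your rank-one computation finishes. So the proposal does not close the key integral step by independent means.
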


\begin{proof}[About the proof]
It is a consequence of the fact, due to Bott, that the loop space of a compact, 1-connected Lie group has the homotopy type of a CW-complex with no odd-dimensional cells and finitely many cells of each even dimension. A proof of this fact is completely Morse-theoretical; see Theorem 21.6 in \cite{Mil}. Based on it, one can conclude as indicated in \cite{MO}.
\end{proof}
\vspace{0.2cm}
Before quoting the third fact, we give the definition of the \emph{Dynkin index} of a homomorphism between two compact, connected, simple Lie groups, as found in \cite{Oni}: 

\begin{defn*}
Let $G_1$ and $G_2$ be compact, connected, simple Lie groups with Lie algebras $\frg_1$ and $\frg_2$, respectively, and let $\alpha_i \in\frh_i^\ast$ be a root of maximal length of $\frg_i$ with respect to a Cartan subalgebra $\frh_i$ ($i=1,2$). Moreover, let $B_i \in ({\rm V}^2 \frg_i^\ast)^{\frg_i}$ be a negative-definite, $\frg_i$-invariant bilinear form\footnote{As mentioned in the proof of the implication (3) $\Rightarrow$ (2) of \autoref{thm:simplefc}, $\dim ({\rm V}^2 \frg_i^\ast)^{\frg_i} = 1$, so the forms $B_i$ are uniquely determined up to a positive constant.}  on $\frg_i$ normalized in such a way that the square of the length of the root $\alpha_i$ with respect to the associated inner product on $\frh_i^\ast$ equals two. If $\varphi: G_1 \to G_2$ is a homomorphism, then there exists a non-negative real number $j_\varphi$ such that $B_2 = j_\varphi \cdot B_1$, called the \emph{Dynkin index} of $\varphi$. 
\end{defn*}

\begin{thm} \label{thm:dynkinindex}
Let $\varphi: G_1 \to G_2$ be a homomorphism between two compact, connected, simple Lie groups $G_1$ and $G_2$. Then:
\begin{enumerate}[label=(\roman*)]
	\item The Dynkin index $j_\varphi$ is a non-negative integer. It is equal to zero if and only if $\varphi$ is the homomorphism that maps every element to the identity.
	\item If $\pi_3(G_i) = \langle \epsilon_i \rangle$ for $i=1,2$ (see \autoref{thm:pi23}), then $\varphi_\# \, \epsilon_1 = \pm j_\varphi \, \epsilon_2$. 
\end{enumerate}
\end{thm}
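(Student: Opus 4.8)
The plan is to read $j_\varphi$ off the third de Rham cohomology of a compact simple group, which is realized by a single bi-invariant $3$-form. For a negative-definite invariant form $B$ on a compact simple Lie algebra, let $\omega_B$ be the bi-invariant $3$-form on the corresponding group with $\omega_B(X,Y,Z)=B(X,[Y,Z])$; it is closed, and it represents a generator of $\HH^3(\,\cdot\,;\bbR)$, which is one-dimensional by \autoref{thm:cartan} and the Proposition of \cite{GHV} recalled above. Writing $\dd\varphi\colon\frg_1\to\frg_2$ for the induced Lie-algebra map, the definition of the index reads $\dd\varphi^\ast B_2=j_\varphi B_1$, and since $\dd\varphi$ is a homomorphism one computes directly that
\[
	\varphi^\ast\omega_{B_2}=\omega_{\dd\varphi^\ast B_2}=j_\varphi\,\omega_{B_1}.
\]
This identity is the engine of the whole argument, transporting $j_\varphi$ from bilinear forms to cohomology.

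I would first dispatch the elementary half of (i). As $B_2$ is negative definite, $(\dd\varphi^\ast B_2)(X,X)=B_2(\dd\varphi X,\dd\varphi X)\le 0$ for all $X\in\frg_1$, whereas $B_1(X,X)<0$ for $X\ne 0$; evaluating $\dd\varphi^\ast B_2=j_\varphi B_1$ on such an $X$ forces $j_\varphi\ge 0$. Furthermore $j_\varphi=0$ is equivalent to $\dd\varphi^\ast B_2=0$, which by negative-definiteness of $B_2$ means $\dd\varphi=0$; since $G_1$ is connected, this happens exactly when $\varphi$ is the trivial homomorphism.

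Next I would pass to homotopy to obtain integrality and (ii) simultaneously. Because $\omega_B$ is closed, $[f]\mapsto\int_{S^3}f^\ast\omega_B$ is a well-defined homomorphism $\pi_3(\,\cdot\,)\to\bbR$; by \autoref{thm:pi23} we have $\pi_3\cong\bbZ$, and I set the period $\kappa_G:=\int_{S^3}f^\ast\omega_B$ for a representative $f$ of the generator $\epsilon_G$. Naturality of pullback yields, with $\varphi_\#\epsilon_1=d\,\epsilon_2$ for some $d\in\bbZ$,
\[
	j_\varphi\,\kappa_1=\int_{S^3}f_1^\ast\varphi^\ast\omega_{B_2}=\int_{S^3}(\varphi\circ f_1)^\ast\omega_{B_2}=d\,\kappa_2.
\]
Granting that $|\kappa_G|$ is a nonzero constant \emph{independent of $G$}, this gives $j_\varphi=\pm d\in\bbZ$, which is at once the integrality in (i) and the relation $\varphi_\#\epsilon_1=\pm j_\varphi\,\epsilon_2$ in (ii).

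The hard part will therefore be the universality of the period $|\kappa_G|$, and I would prove it without invoking the classification by reducing to $\SU(2)$. The long-root subalgebra $\fsu(2)\hookrightarrow\frg$ attached to a root of maximal length generates $\pi_3(G)\cong\bbZ$---a Bott-type fact---so $\epsilon_G$ can be taken as the image of the generator of $\pi_3(\SU(2))$; and the normalization ``the long root has squared length $2$'' is exactly the assertion that $B_\frg$ restricts on this $\fsu(2)$ to the correspondingly normalized form, i.e.\ that the long-root embedding has index $1$. Pulling $\omega_{B_\frg}$ back along it then identifies $\kappa_G$ with $\kappa_{\SU(2)}$ up to sign, a value independent of $G$ and computable directly on $\SU(2)\cong S^3$. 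Making those two classical ingredients fully rigorous---that the long-root $\SU(2)$ generates $\pi_3$, and that the stated normalization renders its index $1$---is the only substantial work; the rest of the argument is formal.
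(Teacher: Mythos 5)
The paper gives no argument for this theorem at all: both parts are deferred to Onishchik (Proposition 11, \S 3 and Theorem 2, \S 17 of \cite{Oni}), so your sketch is necessarily a different route, namely a reconstruction of the standard de Rham/period argument. Its formal skeleton is correct: the identity $\varphi^\ast\omega_{B_2}=\omega_{\dd\varphi^\ast B_2}=j_\varphi\,\omega_{B_1}$ holds (and tacitly corrects the paper's definition, where ``$B_2=j_\varphi B_1$'' should read $\dd\varphi^\ast B_2=j_\varphi B_1$); non-negativity and the characterization of $j_\varphi=0$ do follow from negative-definiteness of $B_2$ together with connectedness of $G_1$; and the period identity $j_\varphi\kappa_1=d\,\kappa_2$ delivers integrality and (ii) simultaneously once $|\kappa_G|$ is known to be a universal nonzero constant. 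The one caveat worth flagging is that your two ``classical ingredients'' are not of equal weight. The normalization step is genuinely easy: the coroot $H_\alpha$ of a long root lies in the Cartan subalgebra of the associated $\fsu(2)_\alpha$, so the restricted form assigns $\alpha$ the same squared length $2$, whence the long-root embedding has index $1$. But the statement that this $\SU(2)\to G$ \emph{generates} $\pi_3(G)\cong\bbZ$ is Bott's theorem and is of essentially the same depth as part (ii) itself---it is precisely the assertion that an index-one homomorphism induces an isomorphism on $\pi_3$---so your argument reduces the theorem to a distinguished special case of itself rather than to something more elementary, and a self-contained write-up would still have to prove or cite that generation theorem. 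What your route buys over the paper's bare citation is transparency: it shows exactly where the integer comes from, why the normalization of the $B_i$ is the right one, and why the sign ambiguity in (ii) is unavoidable (it reflects only the choice of generators $\epsilon_i$).
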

\begin{proof}[About the proof]
Part (i) is Proposition 11, \S 3 of \cite{Oni}, and (ii) is Theorem 2, \S 17 of the same reference.
\end{proof}

\begin{proof}[Proof of \autoref{thm:nonabelian}]
We will prove that the integral homology group $\HH_3(U/L;\bbZ)$ is finite and therefore by the universal coefficient theorem $\HH^3(U/L)=0$. Because $U$ is 1-connected and $L$ is connected, we have that $U/L$ is 1-connected. Therefore, the Hurewicz homomorphism $h:\pi_3(U/L) \rightarrow H_3(U/L;\bbZ)$ in degree three is surjective. The claim follows after proving finiteness of $\pi_3(U/L)$. \vspace{4pt}

Consider now the long exact sequence in homotopy
\[
\cdots \: \rightarrow \pi_3(L) \xrightarrow{\iota_{\#}} \pi_3(U) \xrightarrow{\pi_{\#}} \pi_3(U/L) \rightarrow \pi_2 (L) \rightarrow \cdots
\]
of the fibration $L \overset{\iota}{\hookrightarrow} U \overset{\pi}{\twoheadrightarrow} U/L$. By \autoref{thm:pi23} and exactness, the homomorphism $\pi_{\#}$ is surjective and $\pi_3(U/L) \cong \pi_3(U)/{\rm im} \, \iota_{\#} \cong \bbZ/{\rm im} \, \iota_\#$. Therefore, $\pi_3(U/L)$ is finite if and only if the image of $\iota_{\#}$ is non-trivial. \vspace{4pt}

Let $\frl$ be the Lie algebra of $L$, hence compact and non-Abelian. In particular, $\frl$ splits as the direct sum of its center and a non-trivial semisimple ideal. Thus, let $\frl_1$ be a simple ideal of $\frl$, and let $L_1$ be the 1-connected, simple Lie group with Lie algebra $\frl_1$, which is compact by \autoref{thm:weyl}. Now let $\phi: L_1 \rightarrow L$ be the unique Lie group homomorphism whose derivative is the inclusion $\frl_1 \hookrightarrow \frl$. Again by \autoref{thm:pi23}, we know that $\pi_3(K_1) \cong \bbZ$. We obtain the following diagram in homotopy: \vspace{-3pt}
\[
	\xymatrix{\pi_3(L) \ar[r]^{\iota_{\#}} & \pi_3(U) \cong \bbZ \\
	\bbZ \cong \pi_3(L_1) \qquad \ar[u]^{\phi_{\#}} \ar@{-->}[ur]_{\iota_{\#} \circ \phi_{\#} =: \psi_\#} &  &  & \:}
\]
Set $\psi:=\iota \circ \phi$. Obviously, the image of $\psi_{\#}=\iota_{\#} \circ \phi_{\#}$ is contained in the image of $\iota_{\#}$. Hence, it suffices to show that the former one is non-trivial in order to prove that so is the latter. \vspace{4pt}

To conclude, note that $\psi$ is an immersion, since its derivative is injective. In particular, it is not the homomorphism that maps every element of $L_1$ to the identity of $U$. Thus, the Dynkin index $j_\psi$ of $\psi$ is not zero by \autoref{thm:dynkinindex} (i). Furthermore, by \autoref{thm:dynkinindex} (ii), the generator $\epsilon_{L_1}$ gets mapped by $\psi_{\#}$ to $\pm j_\psi \, \epsilon_U$, so ${\rm im} \, \psi_{\#} \cong j_{\psi} \bbZ \neq \{0\}$.
\end{proof}
\vspace{0.1cm}

\section{Proof of \autoref{thm:simpleic}} 
Let $G$ be a connected, simple Lie group with infinite center $Z$ and Lie algebra $\frg$. We set $G_0 := G/Z$, which is a connected, center-free Lie group with Lie algebra $\frg$, and let $p: G \twoheadrightarrow G_0$ be the canonical projection. In addition, let 
\begin{itemize}
	\item $\frk$ be a maximal compactly embedded subalgebra of $\frg$, which splits as the direct sum	$\frk = \frz(\frk) \oplus \frmm$ of its center $\frz(\frk)$ and a semisimple or trivial ideal $\frmm$;
	\item $\frg = \frk \oplus \frp$ be a Cartan decomposition of $\frg$,
	\item $\frg_u = \frk \oplus i\frp$ be the compact dual of $\frg$,
	\item $K_0 < G_0$ be the connected subgroup with Lie algebra $\frk$,
	\item $K := p^{-1}(K_0)$,
	\item $G_{0,u}$ denote the compact dual of $G_0$,
	\item $K_{0,u}$ be the connected subgroup of $G_{0,u}$ with Lie algebra $\frk$, and
	\item $G_u$ be the compact dual of $G$. 
\end{itemize}

Note first that $\frg$ does not admit a complex structure, because complex Lie groups cannot have infinite center. In consequence, by \autoref{thm:gu_simple}, the Lie algebra $\frg_u$ and the compact dual $G_{0,u}$ are simple. Furthermore, being a cover of $K_0$, the Lie group $K$ has also Lie algebra $\frk$. By \autoref{rem:Kclosed}, $K$ is connected, closed and non-compact in $G$. The non-compactness of $K$ implies that $\frz(\frk)$ cannot be trivial: if that were the case, then $\frk = \frmm \neq 0$, and $K$ would be semisimple, contradicting \autoref{thm:weyl}. \vspace{4pt}

We distinguish again two cases: $\frk$ Abelian and $\frk$ non-Abelian. We will show that the former assumption corresponds to the situation in which $G$ is isomorphic to $\widetilde{\SL(2,\bbR)}$, and then show that $\Hc^3(G)$ is one-dimensional in that case. Then, we show that the latter implies vanishing of $\Hc^3(G)$. \vspace{0.4cm}

\noindent \textbf{Case 1: $\frk$ is Abelian.} We are exactly in the situation of \autoref{thm:abelian}. Thus, it follows that the symmetric space of non-compact type $G_{0,u}/K_{0,u}$ is diffeomorphic to $S^2$. Then, via duality, $G_0/K_0$ is diffeomorphic to the hyperbolic plane $H^2$, $G_0 \cong \PSL(2,\bbR)$, and $G \cong \widetilde{\SL(2,\bbR)}$, being the only infinite cover of $G_0$. \vspace{4pt}

Before showing that the dimension of $\Hc^3(G)$ equals one, we prove the following lemma, which will also be useful in the case of $\frk$ non-Abelian. 

\begin{lem} \label{thm:Mmaxcpt} If $\dim \frz(\frk) = 1$, then the connected subgroup $M < G$ with Lie algebra $\frmm$ is maximal compact in $G$. 
\end{lem}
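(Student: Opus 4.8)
The plan is to prove the slightly stronger statement that $M$ is compact and that $G/M$ is diffeomorphic to a Euclidean space, from which maximal compactness of $M$ is immediate by the standard theory of maximal compact subgroups of connected Lie groups (see \cite{Hel}).

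First I would note that $M$ is compact: since $\frmm$ is a compact semisimple or trivial subalgebra, \autoref{thm:weyl} gives that the connected subgroup $M$ is compact, hence closed in $G$, and of course $M \subseteq K$. Because $\frmm$ is an ideal of $\frk$, the subgroup $M$ is normal in $K$, so $K/M$ is a connected abelian Lie group whose Lie algebra is $\frk/\frmm \cong \frz(\frk)$; by the hypothesis $\dim \frz(\frk) = 1$ this leaves only the two possibilities $K/M \cong \bbR$ and $K/M \cong S^1$.

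The crux is to exclude the circle, and this is where the infinite center enters. Let $M_0 < K_0$ be the connected subgroup with Lie algebra $\frmm$. Since $G_0$ is center-free, \autoref{rem:Kclosed} shows that $K_0$ is a maximal \emph{compact} subgroup of $G_0$; as $M_0$ is a normal subgroup of $K_0$ with one-dimensional abelian quotient Lie algebra, the quotient $K_0/M_0$ is a compact connected $1$-dimensional abelian group, i.e. a circle. The projection $p$ descends to a surjective homomorphism $\hat p\colon K/M \to K_0/M_0 \cong S^1$, and since both groups are one-dimensional this homomorphism has discrete kernel. A direct check identifies that kernel with the image $\bar Z$ of the center $Z$ in $K/M$, namely $\bar Z \cong Z/(Z \cap M)$. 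Now $Z \cap M$ is finite, being a discrete subgroup of the compact group $M$, while $Z$ is infinite; hence $\bar Z$ is an infinite discrete subgroup of $K/M$. As the circle admits no infinite discrete subgroup, we conclude $K/M \cong \bbR$.

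Finally I would assemble the pieces through the fibration $K/M \hookrightarrow G/M \twoheadrightarrow G/K$. Its base $G/K \cong G_0/K_0$ is a symmetric space of non-compact type, hence diffeomorphic to $\bbR^{\dim \frp}$, and its fiber $K/M \cong \bbR$ is contractible; a fiber bundle over a contractible base is trivial, so $G/M \cong \bbR^{\dim \frp + 1}$. To read off maximal compactness cleanly, embed $M$ in a maximal compact subgroup $\hat M$; then in the fibration $\hat M/M \hookrightarrow G/M \twoheadrightarrow G/\hat M$ both $G/M$ and $G/\hat M$ are contractible, which forces the compact fiber $\hat M/M$ to be weakly contractible and therefore a single point. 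Thus $M = \hat M$ is maximal compact. The one genuinely non-formal step is the identification $K/M \cong \bbR$: showing that the one-dimensional central direction unwinds into a line rather than closing into a circle is precisely where the hypothesis that $Z$ is infinite is used.
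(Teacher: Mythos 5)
Your argument is correct, but it reaches the conclusion by a genuinely different route than the paper. You prove the stronger statement that $G/M$ is contractible: first you show $K/M\cong\bbR$ by pushing the infinite center $Z$ into the one-dimensional quotient $K/M$ (its image $Z/(Z\cap M)$ is an infinite discrete subgroup, which rules out the circle), then you trivialize the bundle $G/M\to G/K$ over the contractible base $G/K\cong G_0/K_0$, and finally you compare $G/M$ with $G/\hat{M}$ for a maximal compact $\hat{M}\supseteq M$ to force $\hat{M}/M$ to be a point. The paper instead works inside $K$: since $K$ is non-compact (recorded just before the lemma), the connected subgroup $R<K$ with Lie algebra $\frz(\frk)$ is a copy of $\bbR$, the intersection $R\cap M$ is trivial, so $K\cong R\times M$ and $M$ is the unique maximal compact subgroup of $K$; maximality in $G$ then follows by conjugating any compact $L\supseteq M$ into $K$ and observing that $gMg^{-1}\leq gLg^{-1}\leq M$ forces equality. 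The two proofs use the infinite center at the same spot---to unwind the central circle of $K_0$ into a line---but the paper's endgame is more elementary (a direct conjugation argument via the Cartan fixed-point theorem on $G/K$), whereas yours buys the extra information that $G/M$ is contractible at the price of invoking the Cartan--Iwasawa--Malcev theorem (existence of a maximal compact $\hat{M}$ with $G/\hat{M}$ contractible) together with some bundle and homotopy theory; in particular your closing step needs the small observation that a weakly contractible closed manifold is a point, which is true but worth spelling out.
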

\begin{proof}
By the semisimplicity of $\frmm$ and \autoref{thm:weyl}, the Lie subgroup $M < G$ is compact. Since $K$ is non-compact, the connected Lie subgroup $R < K$ with Lie algebra $\frz(\frk)$ must be non-compact as well. The dimension assumption on $\frz(\frk)$ implies that $R$ is isomorphic to $\bbR$. \vspace{4pt}

Note that the intersection $R \cap M$ is a compact subgroup of $R \cong \bbR$, hence trivial. From the decomposition $\frk = \frz(\frk) \oplus \frmm$ and the previous fact, we obtain an isomorphism $K \cong R \times M$. This implies that $M$ is the unique maximal compact subgroup of $K$. It is in fact a maximal compact subgroup of $G$:  If $L <G$ is a compact subgroup containing $M$, then there exists an element $g \in G$ such that $gLg^{-1} < K$. Consequently, $gMg^{-1} < gLg^{-1} < M$. The equalities hold because $gMg^{-1} < M$ and the two are connected subgroups of $K$ with the same Lie algebra. 
\end{proof}

Note that in our case $\frz(\frk) = \frk \cong \fso(2,\bbR)$, which is one-dimensional, and $\frmm$ is trivial. Hence, the connected subgroup $M$ of $G$ with Lie algebra $\frmm$ is therefore trivial, and by the previous lemma, maximal compact in $G$. Moreover, it is well known that the compact dual of $\frg$ is $\frg_u=\fsu(2)$. Thus, $G$ has as compact dual the Lie group $G_u = \SU(2) \cong S^3$. From \autoref{thm:corollary_cartan} with $M_u$ trivial, we have an isomorphism 
\[
	\Hc^3(G) \cong \HH^3(G_u) = \HH^3(S^3),
\]
and the last one is clearly one-dimensional. \vspace{10pt}

\noindent \textbf{Case 2: $\frk$ is non-Abelian.} This assumption means that both $\frz(\frk)$ and $\frmm$ are non-trivial. By the non-triviality of $\frz(\frk)$ and the simplicity of $G_0$, it follows from Theorems VIII.6.1 and VIII.6.2 of \cite{Hel} that $G_0/K_0$ is an irreducible Hermitian symmetric space of non-compact type, and that the center $Z(K_0)$ of $K_0$ is isomorphic to $S^1$. In particular, $\dim \frz(\frk) = 1$. \vspace{4pt}

The connected subgroup $M < G$ with Lie algebra $\frmm \subset \frg$ is non-trivial and semisimple. By \autoref{thm:Mmaxcpt}, it is also a maximal compact subgroup. Thus, by \autoref{thm:vanest}, $\Hc^3(G) \cong \HH^3(\frg,\frmm)$. On the other hand, by \autoref{thm:weyl}, the connected Lie subgroup $M_u < G_u$ with Lie algebra $\frmm$ is closed. We conclude now by \autoref{thm:corollary_cartan} and \autoref{thm:nonabelian}:
\[
	\Hc^3(G) \cong \HH^3(\frg,\frmm) \cong \HH^3(G_u/M_u) = 0. \vspace{8pt}
\]

\end{document}